\newtheorem{theorem}{Theorem}
\newtheorem{lemma}{Lemma}
\newcommand{\R}{\mathbb{R}}
\newcommand{\C}{\mathbb{C}}
\newcommand{\abs}[1]{\lvert #1\rvert}
\newcommand{\supp}{\mathrm{spt}}
\newcommand{\ceq}{:=}
\DeclareMathOperator*{\Res}{Res}
\begin{document}
	
	\title{Solving Stengle’s Example in Rational Arithmetic:\\ Exact Values of the Moment-SOS Relaxations}
	\author{Didier Henrion}
	\date{\today}
	\maketitle
	
	\begin{abstract}
		We revisit Stengle's classical univariate polynomial optimization example $\min 1-x^2$ s.t. $(1-x^2)^3\ge 0$ whose constraint description is degenerate at the minimizers.
		We prove that the moment-SOS hierarchy of relaxation order $r\ge 3$ has the exact value $-1/r(r-2)$. For this we construct in rational arithmetic a dual polynomial sum-of-squares (SOS) certificate and a primal moment sequence representing a finitely atomic measure. The key ingredients are elementary trigonometric properties of Chebyshev and Gegenbauer polynomial, and a Christoffel-Darboux kernel argument.
	\end{abstract}

	\section{Introduction}
	
	Given a polynomial optimization problem (POP), the moment-SOS hierarchy (aka the Lasserre hierarchy \cite{Lasserre2001}) builds a sequence of semidefinite relaxations of increasing size
	indexed by a relaxation order $r$.
	On the primal side, one optimizes a linear functional over truncated moment sequences subject
	to semidefinite constraints on the moment matrix and localizing matrices. On the dual side, one searches for
	a sum-of-squares (SOS) decomposition of a shifted polynomial.
	See \cite{Henrion2020,Nie2023,Theobald2024} for recent general overviews.
	
	Recent work has developed \emph{worst-case upper bounds} on the relaxation error:
	for broad classes of instances one can guarantee that the gap after $r$ steps is at most
	$O(r^{-k})$, with the exponent $k$ depending on the geometry of the domain
	and regularity assumptions; see \cite{Slot2022,Laurent2024,Gribling2025,Schlosser2026} and references
	therein.
	Such results are \emph{sufficiency} statements: they provide a safety guarantee that the hierarchy cannot
	converge slower than a stated rate, even on the hardest instances in the class.
	
	In contrast, \emph{lower bounds} on the relaxation error show that certain upper bounds are essentially sharp:
	one constructs explicit instances where the hierarchy converges no faster than a given rate. These hardness results are
	\emph{necessity} statements: they quantify intrinsic limitations of the method (or of the representation
	class), and rules out uniformly better guarantees without changing assumptions or algorithms. The earliest quantitative lower bound predates the modern formulation of the moment-SOS hierarchy and is due to Stengle \cite{Stengle1996}; see the discussions in \cite[Chapter~17]{Schmuedgen2017} and \cite[Section~3.5.1]{Nie2023}.
	Stengle provided a univariate example showing a lower bound of $\Omega(1/r^2)$ and an upper bound of $O((\log r)^2/r^2)$. This example is characterized by a degenerate description of the domain.
	Recently, the paper \cite{Baldi2024} gives lower bounds for domains described by non-degenerate inequalities, connecting these with quantitative non-stability phenomena \cite{Scheiderer2005}.
	Lower bounds were also constructed for a specific parametric univariate POP in \cite{Henrion2025}, but in this case the moment-SOS hierarchy has always finite (i.e. non-asymptotic) convergence, contrary to the Stengle example.

	\paragraph{Contribution.}
	The Stengle example appears as Example~1 in \cite{Kocvara2025},
	where high-precision semidefinite programming experiments suggest a $\Theta(1/r^2)$ behavior. We solve Stengle's example analytically, in rational arithmetic, and prove that the relaxation error is \emph{exactly} $-1/r(r-2)$.
	The proof is based on convex duality and basic properties of orthogonal polynomials (Chebyshev and Gegenbauer polynomials, and the
	Christoffel-Darboux kernel) and elementary trigonometric identities, mirroring the role of orthogonal
	polynomials in upper-bound analyses \cite{Slot2022,Laurent2024,Schlosser2026}.
	
	\section{Stengle's example and its moment-SOS relaxations}
	
	\subsection{The POP and its degeneracy}
	
	Consider the univariate constrained polynomial optimization problem (POP)
	\begin{equation}\label{eq:POP}\tag{POP}
		\min_{x\in\R}\ f(x)\ceq 1-x^2
		\qquad\text{s.t.}\qquad
		g(x)\ceq (1-x^2)^3\ \ge 0.
	\end{equation}
	Since $g(x)\ge 0$ holds if and only if $x\in[-1,1]$, the problem reduces to
	\[
	f^* \ceq \min_{|x|\le 1} (1-x^2)=0,
	\]
	attained at the boundary points $x^\star=\pm 1$.
	
	The POP is ill-posed because the feasible set is described by a single inequality $g(x)\ge 0$ whose gradient vanishes at the minimizers:
	\[
	g'(x) = -6x(1-x^2)^2,\qquad g'(\pm 1)=0.
	\]
	Thus the active constraint at $x^\star=\pm 1$ is \emph{degenerate}:
	the linearization carries no first-order information. The standard constraint qualifications of nonlinear programming are violated.
	
	For illustration, let us consider a classical   interior-point method consisting of minimizing, for a given barrier parameter $\mu>0$, a  log-barrier function
	\[
	h_\mu(x)\ceq f(x) - \mu \log g(x)
	\;=\;
	1-x^2 - 3\mu \log(1-x^2),
	\qquad x\in(-1,1).
	\]
	A stationary point satisfies
	\[
	\nabla h_\mu(x) = -2x + \frac{6\mu x}{1-x^2}=0,
	\]
	hence either $x=0$ or $1-x(\mu)^2 = 3\mu$ which yields
	\[
		x(\mu)=\sqrt{1-3\mu}\ \xrightarrow[\mu\downarrow 0]{}\ 1.
	\]
	The dual Lagrange multiplier induced by the log-barrier stationarity is
	\[
	\nabla f(x(\mu)) - \lambda(\mu)\nabla g(x(\mu))=0
	\quad\text{with}\quad
	\lambda(\mu)=\frac{\mu}{g(x(\mu))}=\frac{\mu}{(3\mu)^3}=\frac{1}{27\mu^2}\ \xrightarrow[\mu\downarrow 0]{}\ +\infty.
	\]
	Moreover, the curvature of the barrier objective is
	\[
	\nabla^2 h_\mu(x)= -2 + 6\mu\frac{1+x^2}{(1-x^2)^2},
	\]
	and substituting $x(\mu)^2=1-3\mu$ gives
	\[
		\nabla^2 h_\mu(x(\mu)) = -2 + 6\mu\frac{2-3\mu}{(3\mu)^2}
		= \frac{4}{3\mu}-4\ \xrightarrow[\mu\downarrow 0]{}\ +\infty.
	\]
	Thus the formulation \eqref{eq:POP} is a clean toy model for ill-conditioning of barrier methods:
	the primal iterates converge to the boundary while the dual multiplier and the barrier function Hessian blow up.
	
	\subsection{Moment and SOS relaxations}
	
	Fix an integer $r\ge 3$. Let $\R[x]_r$ denote the vector space of polynomials of the scalar indeterminate $x \in \R$ of degree at most $r$. In the monomial basis $b(x):=(1,x,\dots,x^r)^\top$ we identify a polynomial $p(x)$ with its coefficient vector $p \in \R^{r+1}$, i.e. $p(x) = p^\top b(x)$.
	A truncated moment sequence is a vector $y=(y_0,y_1,\dots,y_{2r})$.
	The associated Riesz functional $\ell_y$ acts linearly on $\R[x]_{2r}$ by
	$\ell_y(x^k)=y_k$ and extension by linearity.
	The moment matrix $M_r(y)$ of order $r$ is the Hankel matrix indexed by monomials $b(x)$:
	\begin{equation}\label{eq:mommat}	p^\top M_r(y)p = \ell_y(p(x)^2)\qquad\forall p\in\R[x]_r.
	\end{equation}
	The  localizing matrix for $g$ at order $r-3$ is
	\begin{equation}\label{eq:locmat}
	q^\top M_{r-3}(g\,y)\,q = \ell_y(g(x)\,q(x)^2)\qquad\forall q\in\R[x]_{r-3}.
	\end{equation}
	
	The order-$r$ moment relaxation of \eqref{eq:POP} is
		\begin{equation}\label{eq:MOM}\tag{MOM$_r$}
			\begin{aligned}
				\inf_y\ & \ell_y(f)\\
				\text{s.t. }& \ell_y(1)=1,\quad M_r(y)\succeq 0,\quad M_{r-3}(g\,y)\succeq 0 
			\end{aligned}
		\end{equation}
	and the order-$r$ SOS relaxation of \eqref{eq:POP} is
		\begin{equation}\label{eq:SOS}\tag{SOS$_r$}
			\begin{aligned}
				\sup_{\varepsilon,\,p,\,q}\ & \varepsilon\\
				\text{s.t. }&
				f(x)-\varepsilon = p(x) + g(x)\,q(x), \quad
				p\in\Sigma[x]_{2r},\quad q\in\Sigma[x]_{2(r-3)}
			\end{aligned}
		\end{equation}
	where the convex cone $\Sigma[x]_{2d} \subset \R[x]_{2d}$ consists of SOS polynomials of degree at most $2d$. The moment-SOS hierarchy for POP was introduced in this exact form in \cite{Lasserre2001}, see e.g. \cite{Henrion2020,Nie2023,Theobald2024} for recent overviews.
	
	\begin{lemma}[Weak duality]\label{prop:weak-duality}
		For every $r\ge 3$, any feasible $y$ in \eqref{eq:MOM} and any feasible
		$(\varepsilon,p,q)$ in \eqref{eq:SOS} satisfy $\ell_y(f)\ge \varepsilon$.
	\end{lemma}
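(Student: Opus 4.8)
The plan is the standard ``evaluate the dual certificate against the primal moment sequence'' argument. Let $y$ be feasible for \eqref{eq:MOM} and let $(\varepsilon,p,q)$ be feasible for \eqref{eq:SOS}. First I would apply the Riesz functional $\ell_y$ to both sides of the polynomial identity $f(x)-\varepsilon = p(x)+g(x)\,q(x)$. This is legitimate because every polynomial appearing on either side has degree at most $2r$: indeed $\deg f = 2$, $\deg p \le 2r$, and $\deg(g\,q)\le 6 + 2(r-3) = 2r$. Using linearity of $\ell_y$ together with the normalization $\ell_y(1)=1$, this identity yields
\[
\ell_y(f) - \varepsilon = \ell_y(p) + \ell_y(g\,q).
\]

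Next I would show that both terms on the right-hand side are nonnegative. Since $p\in\Sigma[x]_{2r}$, write $p = \sum_i p_i(x)^2$ with each $p_i\in\R[x]_r$ (the usual Gram-matrix factorization of an SOS polynomial of degree at most $2r$). Then \eqref{eq:mommat} gives
\[
\ell_y(p) = \sum_i \ell_y\big(p_i(x)^2\big) = \sum_i p_i^\top M_r(y)\,p_i \ \ge\ 0,
\]
because $M_r(y)\succeq 0$. Likewise, writing $q = \sum_j q_j(x)^2$ with $q_j\in\R[x]_{r-3}$, relation \eqref{eq:locmat} yields
\[
\ell_y(g\,q) = \sum_j \ell_y\big(g(x)\,q_j(x)^2\big) = \sum_j q_j^\top M_{r-3}(g\,y)\,q_j \ \ge\ 0,
\]
because $M_{r-3}(g\,y)\succeq 0$. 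Combining the last three displays gives $\ell_y(f) - \varepsilon \ge 0$, which is the assertion.

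I do not expect a genuine obstacle here: the argument is pure linear algebra plus the two positive-semidefiniteness hypotheses. The only point deserving (minor) care is the degree bookkeeping that makes the application of $\ell_y$ to the identity well defined — in particular, that the localizing constraint is imposed at exactly the order $r-3$ so that each $g\,q_j^2$ stays inside $\R[x]_{2r}$, the domain on which $\ell_y$ is specified by the truncated sequence $y=(y_0,\dots,y_{2r})$.
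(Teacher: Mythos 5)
Your proposal is correct and follows essentially the same route as the paper: apply $\ell_y$ to the certificate identity and use $M_r(y)\succeq 0$ and $M_{r-3}(g\,y)\succeq 0$ to conclude $\ell_y(p)\ge 0$ and $\ell_y(g\,q)\ge 0$. The paper states these two nonnegativity claims directly, whereas you spell out the SOS Gram factorizations and the degree bookkeeping, which is a harmless (and welcome) elaboration of the same argument.
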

	
	\begin{proof}
		If $f-\varepsilon=p+g q$ with $p,q$ SOS, then for any feasible $y$,
		\[
		\ell_y(f)-\varepsilon
		= \ell_y(p)+\ell_y(g q)
		\ge 0,
		\]
		because $\ell_y(p)\ge 0$ follows from $M_r(y)\succeq 0$ and
		$\ell_y(g q)\ge 0$ follows from $M_{r-3}(g y)\succeq 0$.
	\end{proof}
	
	\subsection{Strong duality and non-attainment at zero}
	
	The quadratic module generated by $g$ is the set of polynomials $p+gq$ for $p$ and $q$ SOS. We say that a quadratic module is Archimedean when it contains $R^2-x^2$ for some $R>0$. Note that this implies compactness of the set described by the inequality $g(x) \geq 0$. The convergence of the moment-SOS hierarchy, as originally proved in \cite{Lasserre2001}, relies on Putinar's Positivstellensatz (Psatz) \cite{Putinar1993}, which in turn relies on the Archimedean property of the quadratic module.
	
	The quadratic module generated by $g(x)=(1-x^2)^3$ is Archimedean since
		\begin{equation}\label{eq:archi}
			2-x^2 = \underbrace{(x^3-2x)^2 + (x^2-1)^2}_{p(x)} + (1-x^2)^3  \underbrace{(1)}_{q(x)}.
		\end{equation}
	Therefore Putinar's Psatz applies and the moment-SOS hierarchy converges, i.e.
		\[
		\lim_{r\to\infty}\varepsilon_r^\star = f^* =0.
		\]
 	Note however that for our POP the quadratic module and the preordering are the same, since we have only one generator $g$. Convergence of the moment-SOS hierarchy then also follows from Schmüdgen's Psatz \cite{Schmudgen1991} which applies without resorting to the Archimedean property, because the feasibility domain $[-1,1]$ is compact.
	
	\begin{lemma}[Strong duality and attainment]\label{prop:slater}
		For every $r\ge 3$, the primal \eqref{eq:MOM} satisfies Slater's condition, implying
		strong duality and dual attainment:
		\[
		\varepsilon_r^\star := \inf\eqref{eq:MOM} = \min\eqref{eq:MOM} = \sup\eqref{eq:SOS} = \max\eqref{eq:SOS}.
		\]
	\end{lemma}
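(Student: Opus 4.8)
The plan is to exhibit a strictly feasible (Slater) point for \emph{each} of the two semidefinite programs \eqref{eq:MOM} and \eqref{eq:SOS}. These form a primal--dual pair of SDPs (this is exactly how the relaxation is built), so once both are strictly feasible the symmetric strong-duality theorem of conic programming --- if the primal and the dual are both strictly feasible then both values are finite, coincide, and are attained --- delivers the entire chain of equalities in the statement. Boundedness is in any case free: both programs are feasible by the constructions below, and Lemma~\ref{prop:weak-duality} then sandwiches each value by the other.

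\emph{Primal Slater point.} I would take $y^\star$ to be the truncated moment vector of the normalized Lebesgue measure on $[-1,1]$, so $\ell_{y^\star}(1)=1$. For nonzero $p\in\R[x]_r$ one has $p^\top M_r(y^\star)p=\ell_{y^\star}(p^2)=\tfrac12\int_{-1}^{1}p(x)^2\,dx>0$, since $p^2$ is continuous, nonnegative and not identically zero on $[-1,1]$; and for nonzero $q\in\R[x]_{r-3}$, $q^\top M_{r-3}(g\,y^\star)q=\ell_{y^\star}(g\,q^2)=\tfrac12\int_{-1}^{1}(1-x^2)^3q(x)^2\,dx>0$, using $g\ge0$ on $[-1,1]$ and that the integrand vanishes at only finitely many points. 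Hence $M_r(y^\star)\succ0$ and $M_{r-3}(g\,y^\star)\succ0$: this is the Slater's condition for \eqref{eq:MOM} asserted by the lemma.

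\emph{Dual Slater point.} Here I would set $\varepsilon:=-r$ and $q(x):=c\sum_{k=0}^{r-3}x^{2k}$ with a small $c>0$, say $c=\tfrac1{3r^2}$. Then $q$ has positive-definite Gram matrix $cI$, so $q$ lies in the interior of $\Sigma[x]_{2(r-3)}$ and has degree exactly $2(r-3)$. Put $p:=f-\varepsilon-g\,q=1-x^2+r-c(1-x^2)^3\sum_{k=0}^{r-3}x^{2k}$; the term $-x^{2r}$ from $-g$ times the leading monomial of $q$ survives, so $\deg p=2r$, and $p>0$ on all of $\R$ by a short case split on $x^2\le1$ (where $(1-x^2)^3$ and $\sum x^{2k}$ are nonnegative and bounded, so the slack $-\varepsilon=r$ dominates), on $1<x^2<1+r$ (where $1-x^2+r>0$ and $(1-x^2)^3<0$, so both summands of $p$ are positive), and on $x^2\ge1+r$ (where, with $s:=x^2-1\ge r$, one gets $p\ge -(s-r)+c\,s^3$, and $s\mapsto -(s-r)+c\,s^3$ is nondecreasing on $[r,\infty)$ with value $c r^3>0$ at $s=r$). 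Finally, a univariate polynomial strictly positive on $\R$ and of degree exactly $2r$ has a positive-definite Gram matrix: subtract $t\sum_{k=0}^{r}x^{2k}$ for small $t>0$, note it stays strictly positive of degree $2r$ hence is SOS, and add $tI$ to any PSD Gram matrix of the remainder. So $p$ lies in the interior of $\Sigma[x]_{2r}$ and $(\varepsilon,p,q)$ is strictly feasible for \eqref{eq:SOS}.

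The primal Slater point is essentially immediate, so the main obstacle is the dual one: $\varepsilon$ and $q$ must be tuned so that $p=f-\varepsilon-g\,q$ is positive on the whole real line --- the sign change of $(1-x^2)^3$ at $x=\pm1$ makes the band where $x^2$ is just above $1$ the delicate region, which forces $-\varepsilon$ to be of order $r$ rather than $O(1)$ --- while keeping $\deg p=2r$ and $\deg q=2(r-3)$ so that $p,q$ land in the interiors of the respective SOS cones. An alternative route to the attainment $\inf\eqref{eq:MOM}=\min\eqref{eq:MOM}$ would be to show directly that the feasible set of \eqref{eq:MOM} is compact: \eqref{eq:archi} gives $\ell_y\big((2-x^2)x^{2k}\big)\ge0$ for $0\le k\le r-3$, hence $y_{2k}\le 2^k$ for $k\le r-2$, and the remaining top even moments and all odd moments are then bounded using $M_{r-3}(g\,y)\succeq0$ together with the Hankel (log-convexity) inequalities, with the low orders handled separately; this is more computational, so I would favour the two-Slater-point argument above.
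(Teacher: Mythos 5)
Your proposal is correct, and it reaches the conclusion by a genuinely different route from the paper on the attainment side. The primal Slater point is identical (moments of the normalized Lebesgue measure on $[-1,1]$, giving $M_r(y)\succ0$ and $M_{r-3}(g\,y)\succ0$), and this already yields strong duality and dual attainment, which is all the paper extracts from Slater. Where you diverge is in establishing $\inf\eqref{eq:MOM}=\min\eqref{eq:MOM}$: the paper proves the primal feasible set is compact (bounding even moments via the Archimedean identity \eqref{eq:archi} and odd moments via Cauchy--Schwarz on $M_r(y)$) and then minimizes a continuous functional over a compact set, whereas you exhibit a strictly feasible point of the Gram-matrix SDP formulation of \eqref{eq:SOS} and invoke the symmetric conic duality theorem, which hands you primal attainment for free. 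Your dual Slater point checks out: the three-case verification that $p=f+r-g\,q>0$ on $\R$ is sound (in the outer regime $-1+3cs^2=-1+s^2/r^2\ge0$ for $s\ge r$ and the value $cr^3>0$ at $s=r$ close the argument), $\deg p=2r$ with positive leading coefficient $c$, and the perturbation argument ($p-t\sum_{k=0}^r x^{2k}\ge0$ for small $t$, hence SOS, then add $tI$ to its Gram matrix) correctly places $p$ in the interior of $\Sigma[x]_{2r}$; the degree condition $\deg p=2r$ is essential here and you rightly enforce it. Each approach buys something: yours avoids the moment-bounding computations entirely and is arguably cleaner as pure conic duality, at the cost of the somewhat delicate global positivity of $p$; the paper's compactness route is more self-contained but, as your alternative sketch correctly hints, needs care for the top even moments $y_{2r-2},y_{2r}$, since $(2-x^2)x^{2(k-1)}$ only lies in the degree-truncated quadratic module for $k\le r-2$.
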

	
	\begin{proof}
		Let $\mu$ be the normalized Lebesgue measure on $[-1,1]$ and let $y$ be its moment sequence
		($y_k=\int_{-1}^1 x^k\,d\mu$).
		Then for any nonzero $p\in\R[x]_{r}$, $\int p^2\,d\mu>0$, hence $M_r(y)\succ 0$.
		Similarly, $g(x)=(1-x^2)^3>0$ for all $x\in(-1,1)$, so for nonzero $q\in\R[x]_{r-3}$,
		$\int g q^2\,d\mu>0$ and $M_{r-3}(g y)\succ 0$.
		Thus \eqref{eq:MOM} is strictly feasible and conic duality yields strong duality and dual attainment.

		To prove primal attainment, observe that the feasible set is non-empty and closed. From identity \eqref{eq:archi}, for any feasible $y$ it holds $\ell_y(2-x^2)=\ell_y(p_0)+\ell_y(g)\ \ge\ 0.$
		Since $\ell_y(1)=1$, this gives $\ell_y(x^2)\le 2$. Next, for any integer $k\ge 2$ we have the
		pointwise inequality on $\R$: $x^{2k}\ \le\ 2^{k-1}x^2$ whenever  $x^2\le 2.$
		To use this within the moment constraints, we encode the bound $x^2\le 2$ via the SOS polynomial
		$(2-x^2)(x^{k-1})^2$, whose degree is $2k$.
		Indeed, for every $k\in\{1,\dots,r\}$ the polynomial $(2-x^2)\,x^{2(k-1)}$ is in the quadratic module. Hence for every feasible $y$, $0\ \le\ \ell_y\big((2-x^2)x^{2(k-1)}\big)
			= 2\,y_{2k-2}-y_{2k}.$
		This yields the recursion $y_{2k}\le 2y_{2k-2}$ for $k=1,\dots,r$.
		Since $y_0=1$, we obtain by induction $0\le y_{2k}\le 2^k$ for all $k=0,1,\dots,r$.
		For odd moments, the Cauchy-Schwarz inequality with $M_r(y)\succeq 0$ gives $|y_{2k+1}|^2 = |\ell_y(x^{k}\cdot x^{k+1})|^2 \le \ell_y(x^{2k})\,\ell_y(x^{2k+2}) = y_{2k}\,y_{2k+2}
		\le 2^{k}\,2^{k+1}=2^{2k+1}.$
		Thus all coordinates $y_0,\dots,y_{2r}$ are uniformly bounded over the feasible set. The objective $y\mapsto \ell_y(f)$ is linear hence continuous. Minimizing a continuous function
		over a nonempty compact set attains its minimum.
	\end{proof}
		
	\begin{lemma}\label{lem:eps-neg}
		Any dual-feasible triple $(\varepsilon,p,q)$ in \eqref{eq:SOS} satisfies $\varepsilon\le 0$.
		Moreover, $\varepsilon=0$ is \emph{not} feasible in \eqref{eq:SOS} for any finite $r$.
		In particular, $\varepsilon_r^\star<0$ for all $r\ge 3$.
	\end{lemma}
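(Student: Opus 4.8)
The plan is to establish the three assertions in turn: the first two by evaluating the SOS identity at the minimizers $x=\pm1$, where $g$ vanishes to order three, and the third by feeding them into the strong-duality and attainment statement of Lemma~\ref{prop:slater}.

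For $\varepsilon\le0$, I would substitute $x=1$ into the defining identity $f(x)-\varepsilon=p(x)+g(x)q(x)$. Since $f(1)=0$ and $g(1)=(1-1)^3=0$, this collapses to $-\varepsilon=p(1)$, and $p(1)\ge0$ because $p$ is a sum of squares; hence $\varepsilon\le0$. (Alternatively one may invoke Lemma~\ref{prop:weak-duality} against the Lebesgue moment sequence, but the pointwise evaluation is shorter.)

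For the non-attainment of $\varepsilon=0$, I would argue by contradiction: suppose $1-x^2=p(x)+(1-x^2)^3q(x)$ with $p,q$ SOS, and write $p=\sum_i s_i^2$. Evaluating at $x=1$ forces $p(1)=0$, hence $s_i(1)=0$ for every $i$, so $(x-1)\mid s_i$; by symmetry, evaluating at $x=-1$ gives $(x+1)\mid s_i$. Thus $(1-x^2)\mid s_i$ for all $i$, and writing $s_i=(1-x^2)t_i$ yields $p=(1-x^2)^2\tilde p$ with $\tilde p:=\sum_i t_i^2$ still SOS. Substituting back and cancelling the factor $1-x^2$ (legitimate, since $\R[x]$ is an integral domain) gives $1=(1-x^2)\bigl(\tilde p(x)+(1-x^2)q(x)\bigr)$, whose right-hand side vanishes at $x=1$ while the left-hand side is the constant $1$ --- a contradiction. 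So no dual-feasible triple has $\varepsilon=0$.

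Finally, by Lemma~\ref{prop:slater} the supremum in \eqref{eq:SOS} is attained, say at $(\varepsilon_r^\star,p,q)$; the first step gives $\varepsilon_r^\star\le0$, and $\varepsilon_r^\star=0$ would make this attaining triple contradict the second step, so $\varepsilon_r^\star<0$ for every $r\ge3$. I expect no genuine obstacle here: the one point requiring a line of care is the divisibility step --- that a real polynomial vanishing at a real point is divisible by the corresponding linear factor and that this operation preserves the SOS structure --- both immediate. The substance of the lemma is just that the cube $g=(1-x^2)^3$ is too degenerate for the exact identity $f=p+gq$ to hold at any finite relaxation order.
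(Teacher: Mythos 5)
Your proposal is correct and follows essentially the same route as the paper: $\varepsilon\le 0$ by evaluating the identity at $x=1$, and infeasibility of $\varepsilon=0$ by a contradiction based on the order of vanishing at the degenerate boundary point. The only cosmetic difference is that the paper invokes the even multiplicity of real roots of an SOS polynomial to compare vanishing orders at $x=1$ alone, whereas you factor $(1-x^2)$ out of each square $s_i$ using both endpoints and cancel --- the same underlying mechanism.
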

	
	\begin{proof}
		Evaluating $f(x)-\varepsilon=p(x)+g(x)q(x)$ at $x=1$ gives $-\varepsilon=p(1) \ge 0$.
		Assume by contradiction that $\varepsilon=0$ is feasible, i.e. 
		$1-x^2 = p(x) + (1-x^2)^3 q(x)$ for $p, q$ SOS.
		At $x=1$, the right-hand side equals $p(1)$, so $p(1)=0$.
		Since $p$ is SOS, every real root has even multiplicity; therefore $p$ vanishes at $x=1$ with
		multiplicity at least $2$.
		The term $(1-x^2)^3 q(x)$ vanishes at $x=1$ with multiplicity at least $3$.
		Hence the right-hand side vanishes at $x=1$ with multiplicity at least $2$,
		whereas the left-hand side $1-x^2$ has a simple zero at $x=1$, a contradiction.
	\end{proof}

	\subsection{Main theorem and proof strategy}
	
	\begin{theorem}[Exact relaxation value]\label{thm:main}
		For every integer $r\ge 3$, the order-$r$ moment--SOS relaxation value is
		\[
		\varepsilon_r^\star = -\frac{1}{r(r-2)}.
		\]
	\end{theorem}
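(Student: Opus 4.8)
\emph{Plan.} The plan is to prove the two inequalities $\varepsilon_r^\star\ge-\tfrac1{r(r-2)}$ and $\varepsilon_r^\star\le-\tfrac1{r(r-2)}$ separately, by exhibiting an explicit dual-feasible triple for \eqref{eq:SOS} and an explicit primal-feasible point for \eqref{eq:MOM}, both with value $-\tfrac1{r(r-2)}$; by Lemma~\ref{prop:weak-duality} this pins down $\varepsilon_r^\star$, and Lemma~\ref{prop:slater} then certifies that both constructed solutions are optimal. Both constructions exploit the $x\mapsto-x$ symmetry of \eqref{eq:POP}: averaging a feasible triple of \eqref{eq:SOS} with its reflection (which is again feasible, since $f,g$ are even, and a convex combination of sums of squares is a sum of squares) shows that the value of \eqref{eq:SOS} is attained with $p,q$ even; likewise \eqref{eq:MOM} has a symmetric optimizer. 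I therefore pass to the variable $t\ceq1-x^2$, writing $p(x)=\phi(1-x^2)$ and $q(x)=\tilde q(1-x^2)$ with $\deg\phi\le r$ and $\deg\tilde q\le r-3$; since a univariate polynomial is a sum of squares iff it is nonnegative, the constraint of \eqref{eq:SOS} becomes the identity $t-\varepsilon=\phi(t)+t^3\tilde q(t)$ together with $\phi,\tilde q\ge0$ on $(-\infty,1]$.

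\emph{Dual certificate.} For the lower bound I take $\varepsilon_r\ceq-\tfrac1{r(r-2)}$ and
\[
 p(x)\;\ceq\;\frac{\bigl(r\,T_{r-2}(x)-(r-2)\,T_r(x)\bigr)^2}{4\,r(r-2)},
\]
with $T_n$ the degree-$n$ Chebyshev polynomial of the first kind, so that $p$ is a single square times a positive rational constant, a sum of squares of degree $2r$. Setting $q\ceq(f-\varepsilon_r-p)/g$, there are three things to check. (i) $q$ is a polynomial: writing $N\ceq f-\varepsilon_r-p$ (degree $2r$), the structure of $V_r\ceq r\,T_{r-2}-(r-2)T_r$ forces $N'(\pm1)=N''(\pm1)=0$ regardless of $\varepsilon_r$ (via $T_n(1)=1$, $T_n'(1)=n^2$, $T_n''(1)=\tfrac{n^2(n^2-1)}3$ and the resulting cancellations), so $N$ is divisible by $g=(1-x^2)^3$ exactly when $N(\pm1)=0$ — which holds precisely for $\varepsilon_r=-\tfrac1{r(r-2)}$; this is where the relaxation value enters. (ii) $\deg q\le 2r-6$, then automatic. (iii) $q\ge0$: in fact $q$ equals a positive rational constant times the square of a Gegenbauer (ultraspherical) polynomial of degree $r-3$, which I establish from elementary trigonometric identities (Chebyshev on $[-1,1]$ and a $\cosh$ evaluation off $[-1,1]$). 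This gives a feasible point of \eqref{eq:SOS} of value $\varepsilon_r$, hence $\varepsilon_r^\star\ge-\tfrac1{r(r-2)}$.

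\emph{Primal sequence.} For the upper bound I take the atomic measure $\mu_r$ supported on the $r$ zeros of $V_r$. Using $x=\cos\theta$ one shows $r-2$ of these lie in $(-1,1)$, and using $x=\cosh s$ that $V_r(\cosh s)=r\cosh((r-2)s)-(r-2)\cosh(rs)$ has exactly one positive zero, so the remaining two roots form a symmetric pair $\pm\xi_{\mathrm{ext}}$ with $\xi_{\mathrm{ext}}>1$. The weights of $\mu_r$ are fixed by $\ell_y(1)=1$ together with the complementary-slackness equations stating that $M_{r-3}(g\,y)$ annihilates the coefficient vector of the Gegenbauer polynomial of (iii); one checks this square linear system has a strictly positive solution. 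Feasibility of the resulting $y$ in \eqref{eq:MOM} reduces to two points: $M_r(y)\succeq0$, automatic because $\mu_r$ is a positive measure, and $M_{r-3}(g\,y)\succeq0$, which is the crux — because $g<0$ at the two exterior atoms, the localizing form $h\mapsto\sum_iw_i\,g(\xi_i)\,h(\xi_i)^2$ on $\R[x]_{r-3}$ is not a manifest sum of squares. I verify its positive semidefiniteness with a Christoffel–Darboux argument, re-expressing this quadratic form through the reproducing kernel of the relevant Gegenbauer weight so that the rank-two negative contribution of $\pm\xi_{\mathrm{ext}}$ is dominated by the interior nodes. Finally one computes $\ell_y(f)=1-y_2=-\tfrac1{r(r-2)}$ (equivalently $y_2=(r-1)^2/((r-1)^2-1)$), so $\varepsilon_r^\star\le-\tfrac1{r(r-2)}$, and combining the two bounds yields the theorem.

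The step I expect to be the real obstacle is the primal localizing constraint $M_{r-3}(g\,y)\succeq0$: it is exactly here that the Christoffel–Darboux kernel is needed, and the node locations and weights must be controlled precisely enough that the domination of the two exterior atoms by the interior ones works uniformly in $r$. A secondary, more bookkeeping-type difficulty is the parity dichotomy — for odd $r$, $V_r$ vanishes at $x=0$, so $\mu_r$ has an atom there and $\phi$ carries an extra factor $1-t$ — together with the trigonometric identities underpinning (i), (iii) and the value $y_2=(r-1)^2/((r-1)^2-1)$.
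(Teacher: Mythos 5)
Your proposal follows essentially the same route as the paper: your $V_r = r\,T_{r-2}-(r-2)\,T_r$ is exactly $-2A_r$ from \eqref{eq:def-Ar-Br}, so your dual certificate $p=V_r^2/(4r(r-2))=A_r(x)^2/(r(r-2))$ with quotient $q$ a scaled square of the Gegenbauer polynomial $P^{(2)}_{r-3}$ is the paper's identity \eqref{eq:identity}, and your primal measure (atoms at the roots of $A_r$, with the localizing matrix handled by a Christoffel--Darboux/reproducing-kernel domination of the two exterior nodes) is the paper's construction \eqref{eq:mu} and Lemma~\ref{lem:psd-localizing}. The only presentational differences are that you obtain $q$ by polynomial division plus the checks $N(\pm1)=N'(\pm1)=N''(\pm1)=0$ rather than proving the Pythagorean identity directly, and you fix the weights by a complementary-slackness linear system rather than the explicit formula $w_i=\bigl(r(r-2)(1-x_i^2)\bigr)^{-2}$ verified by residue calculus; both lead to the same objects, and you correctly identify $M_{r-3}(g\,y)\succeq0$ as the crux.
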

	
	To prove this result, we will construct analytically:
	\begin{itemize}[leftmargin=2em]
		\item in Section~\ref{sec:SOS}, a dual-feasible SOS certificate  with $\varepsilon=-1/(r(r-2))$;
		\item in Section~\ref{sec:Moments}, a primal-feasible moment sequence $y$ with $\ell_y(1-x^2)=-1/(r(r-2))$.
	\end{itemize}
	The proof of Theorem \ref{thm:main} is then given in Section \ref{sec:proof}.
	
	\section{The SOS side: a pure-square identity}\label{sec:SOS}
	
	The numerical solution of \eqref{eq:SOS} suggests that optimal SOS polynomials are perfect squares.
	Motivated by this observation, we seek an identity of the form
	\begin{equation}\label{eq:identity}
		r(r-2)(1-x^2)+1 \;=\; A_r(x)^2 + 4\,(1-x^2)^3\,B_r(x)^2,
	\end{equation}
	for some $A_r \in \R[x]_r$, $B_r \in \R[x]_{r-3}$,
	which would yield the order-$r$ certificate
	\begin{equation}\label{eq:certificate}
	1-x^2+\frac{1}{r(r-2)}=\underbrace{\frac{A_r(x)^2}{r(r-2)}}_{p(x)} + (1-x^2)^3\underbrace{\frac{4B_r(x)^2}{r(r-2)}}_{q(x)},
	\end{equation}
	hence $\varepsilon=-1/(r(r-2))$ would be feasible in \eqref{eq:SOS}.
	
	\subsection{Chebyshev and Gegenbauer polynomials}
	
	Let $T_k$ denote the Chebyshev polynomial of the first kind, defined by the three-term recurrence
	\[
	T_{k+1}(x)=2xT_k(x)-T_{k-1}(x),\qquad T_0(x)=1,\ \ T_1(x)=x.
	\]
	See \cite[Ch.~22, \S22.7, Table~22.7.4]{AbramowitzStegun1964} for the recurrence and \cite[Ch.~22, \S22.4, Eq.~(22.4.4)]{AbramowitzStegun1964} for the initial values.
	
	Let $P_k^{(2)}$ denote the Gegenbauer polynomial of parameter $2$, defined by the recurrence
	\[
	(k+1)P_{k+1}^{(2)}(x)=2(k+2)x\,P_k^{(2)}(x)-(k+3)P_{k-1}^{(2)}(x),\qquad
	P_0^{(2)}(x)=1,\ \ P_1^{(2)}(x)=4x.
	\]
	Gegenbauer polynomials are also called Jacobi's ultraspherical polynomials \cite[Sec. 4.7]{Szego1975}, see \cite[(4.7.17)]{Szego1975} for the recurrence.
	See also \cite[Ch.~22, \S22.7, Eq.~(22.7.3)]{AbramowitzStegun1964} for the recurrence and \cite[Ch.~22, \S22.4, Eq.~(22.4.2)]{AbramowitzStegun1964} for the initial values.
	
	Define, for $r\ge 3$,
	\begin{equation}\label{eq:def-Ar-Br}
		A_r(x)\ceq \frac{r-2}{2}T_r(x)-\frac{r}{2}T_{r-2}(x),
		\qquad
		B_r(x)\ceq P_{r-3}^{(2)}(x).
	\end{equation}
	
	\subsection{Proof of the identity via a Pythagorean trigonometric argument}
	
	\begin{lemma}\label{lem:sos}
		For every $r\ge 3$, the polynomials $A_r,B_r$ defined in \eqref{eq:def-Ar-Br}
		satisfy the identity \eqref{eq:identity}.
	\end{lemma}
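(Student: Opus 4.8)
The plan is to verify the polynomial identity \eqref{eq:identity} on the interval $[-1,1]$ by means of the substitution $x=\cos\theta$, $\theta\in[0,\pi]$; since both sides are polynomials in $x$, agreement on the infinitely many points of $[-1,1]$ forces the identity on all of $\R$. Under this substitution $T_k(\cos\theta)=\cos k\theta$ and $1-x^2=\sin^2\theta$, so from \eqref{eq:def-Ar-Br} one gets
\[
A_r(\cos\theta)=\frac{r-2}{2}\cos r\theta-\frac{r}{2}\cos (r-2)\theta ,
\]
and the first summand on the right of \eqref{eq:identity} is simply $A_r(\cos\theta)^2$.

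The first substantive step is to bring the second summand $4(1-x^2)^3B_r(x)^2=4\sin^6\theta\,P_{r-3}^{(2)}(\cos\theta)^2$ into trigonometric form. Comparing the recurrence and initial data in \eqref{eq:def-Ar-Br} with the classical ones shows that $P_k^{(2)}$ coincides with the Gegenbauer (ultraspherical) polynomial $C_k^{(2)}$ of parameter $\lambda=2$, and I would then use the standard derivative identity $\frac{d}{dx}C_n^{(\lambda)}=2\lambda\,C_{n-1}^{(\lambda+1)}$ at $\lambda=1$, i.e. $C_{r-3}^{(2)}=\tfrac12\,\frac{d}{dx}U_{r-2}$, where $U_n$ is the Chebyshev polynomial of the second kind, $U_n(\cos\theta)=\sin((n+1)\theta)/\sin\theta$. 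Differentiating $\sin((r-1)\theta)/\sin\theta$, converting derivatives through $\frac{d}{dx}=-\frac{1}{\sin\theta}\frac{d}{d\theta}$, and applying the product-to-sum formulas would give
\[
2\sin^3\theta\;B_r(\cos\theta)=\frac{r}{2}\sin (r-2)\theta-\frac{r-2}{2}\sin r\theta ,
\]
so that $4(1-x^2)^3B_r(x)^2=\bigl(\tfrac{r-2}{2}\sin r\theta-\tfrac{r}{2}\sin (r-2)\theta\bigr)^2$. I expect this identification and the attendant bookkeeping --- pinning down the normalization of $B_r$ and getting every sign and coefficient right --- to be the main obstacle; an alternative that avoids the Gegenbauer--Chebyshev dictionary altogether is to prove the displayed formula for $B_r$ directly by induction on $r$ from its defining three-term recurrence.

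With both summands in trigonometric form, the identity to prove becomes
\[
r(r-2)\sin^2\theta+1=\Bigl(\tfrac{r-2}{2}\cos r\theta-\tfrac{r}{2}\cos (r-2)\theta\Bigr)^2+\Bigl(\tfrac{r-2}{2}\sin r\theta-\tfrac{r}{2}\sin (r-2)\theta\Bigr)^2 .
\]
I would finish by recognizing the right-hand side as $\bigl\lvert\tfrac{r-2}{2}e^{ir\theta}-\tfrac{r}{2}e^{i(r-2)\theta}\bigr\rvert^2=\bigl\lvert\tfrac{r-2}{2}e^{2i\theta}-\tfrac{r}{2}\bigr\rvert^2$, which expands to $\tfrac{(r-2)^2+r^2}{4}-\tfrac{r(r-2)}{2}\cos 2\theta$; substituting $\cos 2\theta=1-2\sin^2\theta$ and using $(r-2)^2+r^2-2r(r-2)=4$ collapses this to exactly $1+r(r-2)\sin^2\theta$. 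This is the "Pythagorean" cancellation announced in the section title, and it would establish \eqref{eq:identity}.
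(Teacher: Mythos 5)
Your proposal is correct and follows essentially the same route as the paper: the substitution $x=\cos\theta$, the Gegenbauer derivative identity $P_{r-3}^{(2)}=\tfrac12\frac{d}{dx}U_{r-2}$ to put $2\sin^3\theta\,B_r(\cos\theta)$ in trigonometric form, and the recognition of the right-hand side as a squared complex modulus. The only cosmetic difference is that you keep everything in terms of $e^{ir\theta}$ and $e^{i(r-2)\theta}$ and factor out $e^{i(r-2)\theta}$, whereas the paper groups via $\phi=(r-1)\theta$ and the auxiliary $z=(\cos\theta-i(r-1)\sin\theta)e^{i\phi}$; both collapse to $1+r(r-2)\sin^2\theta$, and your stated formula for $2\sin^3\theta\,B_r(\cos\theta)$ checks out.
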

	
	\begin{proof}
	Fix $\theta\in\R$ and $x=\cos\theta$.
 	Recall the classical trigonometric representations
	\cite[Ch.~22, \S22.3, Eqs.~(22.3.15)--(22.3.16)]{AbramowitzStegun1964}, \cite[(1.12.3)]{Szego1975}:
	\begin{equation}\label{eq:trigo}
	T_k(\cos\theta)=\cos(k\theta),
	\qquad
	U_k(\cos\theta)=\frac{\sin(k+1)\theta}{\sin\theta}
	\end{equation}
	where $U_k$ denote the Chebyshev polynomial of the second kind, defined by the recurrence
\[
U_{k+1}(x)=2x\,U_k(x)-U_{k-1}(x),\qquad U_0(x)=1,\ \ U_1(x)=2x.
\]
See \cite[Ch.~22, \S22.7, Eq.~(22.7.5)]{AbramowitzStegun1964} for the recurrence  
and \cite[Ch.~22, \S22.4, Eq.~(22.4.5)]{AbramowitzStegun1964} for the initial values.  

	Moreover, the Chebyshev polynomial of the second kind is a special Gegenbauer polynomial:
	\[
	U_k(x)=P_k^{(1)}(x)
	\]
	see \cite[Ch.~22, \S22.5, Eq.~(22.5.34)]{AbramowitzStegun1964}) or \cite[(4.7.2)]{Szego1975}.
	Using the Gegenbauer derivative identity \cite[(4.7.14)]{Szego1975}:
	\[
	\frac{d}{dx}P_{k+1}^{(1)}(x)=2\,P_k^{(2)}(x),
	\]
	we obtain
	\[
	P_k^{(2)}(x)=\tfrac12\,\frac{d}{dx}U_{k+1}(x).
	\]
	Combining this with \eqref{eq:trigo} and the chain rule
	$\frac{d}{dx}=\frac{d\theta}{dx}\frac{d}{d\theta}=-\frac{1}{\sin\theta}\frac{d}{d\theta}$  
	yields the trigonometric form
	\begin{equation}\label{eq:crcos}
	P_{k}^{(2)}(\cos\theta)
	=\frac{\sin(k+2)\theta\ \cos\theta-(k+2)\ \cos(k+2)\theta\ \sin\theta}{2\sin^3\theta}.
	\end{equation}
Let $\phi\ceq (r-1)\theta$. From  \eqref{eq:def-Ar-Br}  and \eqref{eq:trigo}, we obtain
\begin{equation}\label{eq:Ar-cos-expand}
	A_r(\cos\theta)
	=\frac{r-2}{2}\cos r\theta-\frac{r}{2}\cos (r-2)\theta.
\end{equation}
Recall the addition formulas
\[
\cos r\theta =\cos(\phi+\theta)=\cos\phi\ \cos\theta-\sin\phi\ \sin\theta,
\]
\[
\cos (r-2)\theta =\cos(\phi-\theta)=\cos\phi\ \cos\theta+\sin\phi\ \sin\theta.
\]
Substitute these into \eqref{eq:Ar-cos-expand}:
\begin{align}
	A_r(\cos\theta)
	&=\frac{r-2}{2}\big(\cos\phi\ \cos\theta-\sin\phi\ \sin\theta\big)
	-\frac{r}{2}\big(\cos\phi\ \cos\theta+\sin\phi\ \sin\theta\big) \nonumber \\[0.3em]
	&=\Big(\frac{r-2}{2}-\frac{r}{2}\Big)\cos\phi\ \cos\theta
	+\Big(-\frac{r-2}{2}-\frac{r}{2}\Big)\sin\phi\ \sin\theta \nonumber \\[0.3em]
	&=-\cos\phi\ \cos\theta-(r-1)\sin\phi\ \sin\theta.\label{eq:arcos}
\end{align}
Define
\[
z\ceq \big(\cos\theta-i(r-1)\ \sin\theta\big)\,e^{i\phi}.
\]
Then
\[
|z|^2
=
\big|\cos\theta-i(r-1)\ \sin\theta\big|^2
=
\cos^2\theta+(r-1)^2\ \sin^2\theta
=
1+r(r-2)\ \sin^2\theta.
\]
Moreover, expanding $z$ reveals its and real and imaginary parts
\[
\Re z=\cos\theta\ \cos\phi+(r-1)\ \sin\theta\ \sin\phi,
\qquad
\Im z=\cos\theta\ \sin\phi-(r-1)\ \sin\theta\ \cos\phi.
\]
By \eqref{eq:def-Ar-Br}-\eqref{eq:crcos}-\eqref{eq:arcos} we get
\[
A_r(\cos\theta)^2 = (\Re z)^2,
\qquad
4\sin^6\theta\,B_r(\cos\theta)^2 = (\Im z)^2.
\]
Therefore
\[
A_r(\cos\theta)^2 + 4(1-\cos^2\theta)^3\ B_r(\cos\theta)^2
= (\Re z)^2+(\Im z)^2 = |z|^2 = 1+r(r-2)\ \sin^2\theta.
\]
Since $\sin^2\theta=1-\cos^2\theta=1-x^2$, we obtain \eqref{eq:identity} at $x=\cos\theta$.
Both sides are polynomials in $x$ and agree on infinitely many $x\in[-1,1]$, hence they agree identically on $\R$.
\end{proof}
	\subsection{Expressions for low relaxation orders}
\begin{figure}[h!]
	\centering
	\includegraphics[width=\textwidth]{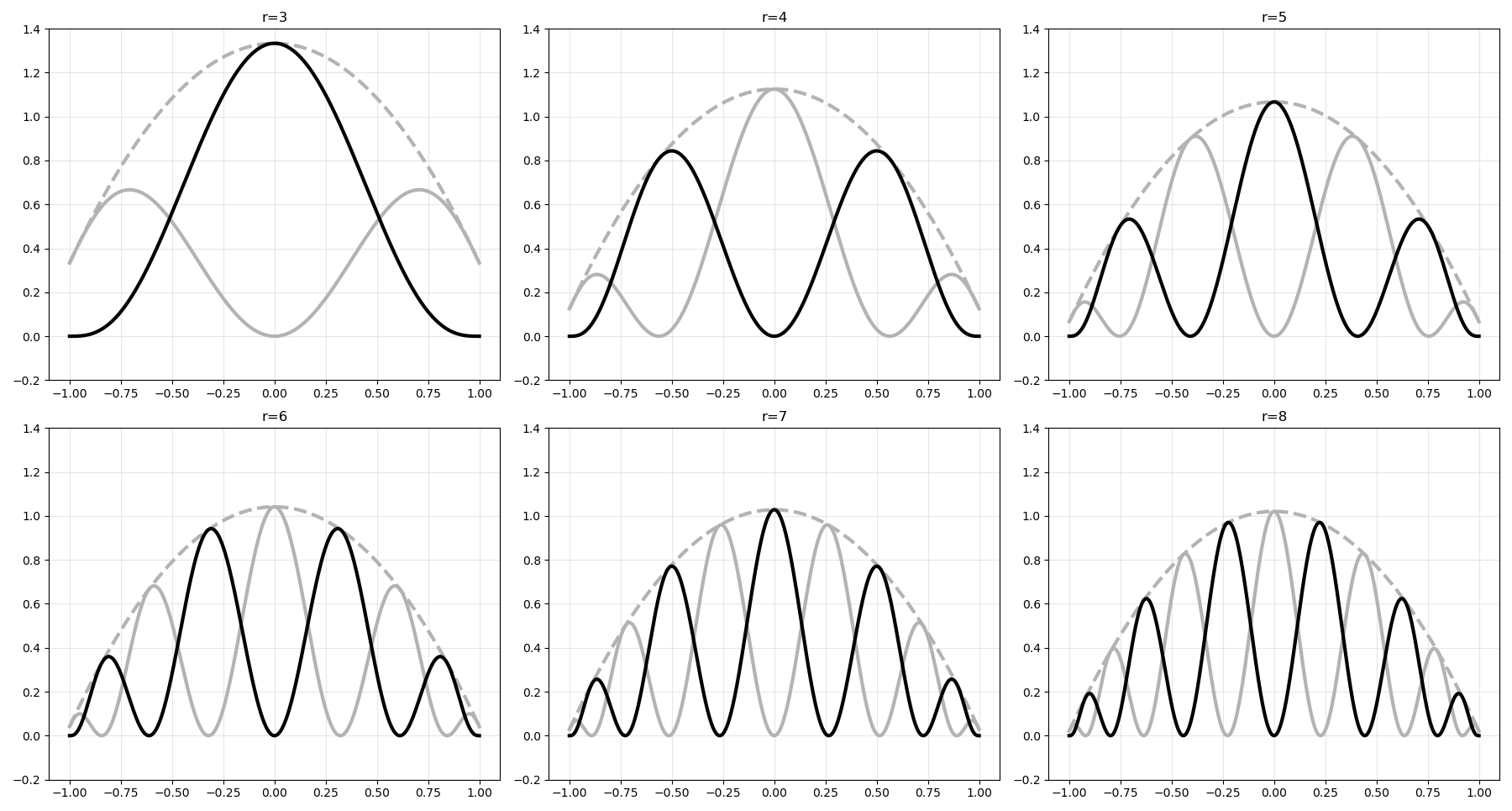}
	\caption{For each relaxation order $r\in\{3,\dots,8\}$ we represent, over $x\in[-1,1]$, the SOS identity \eqref{eq:certificate}.
		The dashed gray curve is the shifted objective $1-x^2+1/r(r-2)$, the plain gray curve is the term $A_r(x)^2/r(r-2)$, and the black curve is the term $4(1-x^2)^3B_r(x)^2/r(r-2)$. The value $-\varepsilon_r = 1/r(r-2)$ can be read from the gray curves at $x=\pm 1$.}
	\label{fig:stengle-sos-identity}
\end{figure}	
	For illustration, the polynomials $(A_r,B_r)$ in the monomial basis are:
	\[
	\begin{array}{rcl}
		A_3(x)&=&2x^3-3x,\\
		A_4(x)&=&8x^4-12x^2+3,\\
		A_5(x)&=&24x^5-40x^3+15x,\\
		A_6(x)&=&64x^6-120x^4+60x^2-5,\\
		A_7(x)&=&160x^7-336x^5+210x^3-35x,\\
		A_8(x)&=&384x^8-896x^6+672x^4-168x^2+7,
	\end{array}
	\qquad
	\begin{array}{rcl}
		B_3(x)&=&1,\\
		B_4(x)&=&4x,\\
		B_5(x)&=&12x^2-2,\\
		B_6(x)&=&32x^3-12x,\\
		B_7(x)&=&80x^4-48x^2+3,\\
		B_8(x)&=&192x^5-160x^3+24x.
	\end{array}
	\]
	The corresponding SOS identities \eqref{eq:certificate} are visualized on Figure \ref{fig:stengle-sos-identity}.

	\section{The moment side: atoms, weights, and positivity}\label{sec:Moments}
	
	In this section we construct a primal-feasible moment sequence $y$ attaining the value
	$\ell_y(1-x^2)=-1/(r(r-2))$.
	
	\subsection{Complementarity suggests atomic support on the roots of $A_r$}
	 
	The identity \eqref{eq:identity} implies that if $(\varepsilon^\star_r,s_0^\star,s_1^\star)$ would be an optimal certificate then
	\[
	\varepsilon_r^\star = -\frac{1}{r(r-2)},
	\qquad
	s_0^\star(x)=\frac{1}{r(r-2)}A_r(x)^2,
	\qquad
	s_1^\star(x)=\frac{4}{r(r-2)}B_r(x)^2.
	\]
	Complementarity with a primal optimal $y$ would read
	\[
	\ell_y(s_0^\star)=0,
	\qquad
	\ell_y(g s_1^\star)=0.
	\]
	Since $s_0^\star$ and $g s_1^\star$ are nonnegative polynomials, these equalities force
	$s_0^\star=0$ and $g s_1^\star=0$ on the support of any representing measure of $y$.
	In particular, $\supp\mu\subseteq\{x \in \R :A_r(x)=0\}$ is a natural candidate.
	
	\subsection{Roots of $A_r$ and their location}

	A key structural fact is that $A_r$ is (up to scaling) a Jacobi polynomial $P^{(\alpha,\beta)}_r$ with parameters
	$\alpha=\beta=-\tfrac32$. See e.g. \cite[Sec. 2.4]{Szego1975} for the definition of Jacobi polynomials, and \cite[Chap. 4]{Szego1975} for their properties.
	
\begin{lemma}[Jacobi representation]\label{lem:Ar-Jacobi}
	For every $r\ge 3$,
	\[
	A_r(x)=\frac{2\sqrt{\pi}\,\Gamma(r+1)}{\Gamma(r-\tfrac12)}\,P_r^{(-3/2,-3/2)}(x)
	\]
	where $\Gamma$ is the Gamma function satisfying  $\Gamma(\tfrac12)=\sqrt{\pi}$, $\Gamma(1)=1$ and $\Gamma(k+1)=k!$ for integer $k$.
\end{lemma}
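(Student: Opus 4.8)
The plan is to reduce the claimed identity to a first-order differential relation that, on both sides, collapses to a single Chebyshev polynomial, and then fix the multiplicative constant by one point evaluation. The starting observation is that $A_r$ has a remarkably simple derivative: from $T_k'(x)=kU_{k-1}(x)$ (a consequence of \eqref{eq:trigo}) and the telescoping identity $U_{r-1}-U_{r-3}=2T_{r-1}$ — which follows from $\sin r\theta-\sin(r-2)\theta=2\sin\theta\cos(r-1)\theta$ — differentiating \eqref{eq:def-Ar-Br} gives
\[
A_r'(x)=\frac{r(r-2)}{2}\bigl(U_{r-1}(x)-U_{r-3}(x)\bigr)=r(r-2)\,T_{r-1}(x).
\]

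On the Jacobi side, the standard differentiation formula $\frac{d}{dx}P_n^{(\alpha,\beta)}=\tfrac12(n+\alpha+\beta+1)\,P_{n-1}^{(\alpha+1,\beta+1)}$ with $n=r$ and $\alpha=\beta=-\tfrac32$ gives $\frac{d}{dx}P_r^{(-3/2,-3/2)}=\tfrac{r-2}{2}\,P_{r-1}^{(-1/2,-1/2)}$, and $P_{r-1}^{(-1/2,-1/2)}=P_{r-1}^{(-1/2,-1/2)}(1)\,T_{r-1}$ is a known multiple of $T_{r-1}$, with $P_{r-1}^{(-1/2,-1/2)}(1)=(\tfrac12)_{r-1}/(r-1)!=\Gamma(r-\tfrac12)/\bigl(\sqrt{\pi}\,\Gamma(r)\bigr)$ via the boundary value $P_n^{(\alpha,\beta)}(1)=(\alpha+1)_n/n!$. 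Setting $c_r\ceq 2\sqrt{\pi}\,\Gamma(r+1)/\Gamma(r-\tfrac12)$, the two computations combine into
\[
\frac{d}{dx}\bigl(c_r\,P_r^{(-3/2,-3/2)}(x)\bigr)=c_r\cdot\frac{r-2}{2}\cdot\frac{\Gamma(r-\tfrac12)}{\sqrt{\pi}\,\Gamma(r)}\,T_{r-1}(x)=r(r-2)\,T_{r-1}(x),
\]
where the last equality is the one-line Gamma simplification using $c_r=2\sqrt{\pi}\,\Gamma(r+1)/\Gamma(r-\tfrac12)$ and $\Gamma(r+1)/\Gamma(r)=r$. Thus $c_r\,P_r^{(-3/2,-3/2)}$ and $A_r$ have the same derivative, so $A_r-c_rP_r^{(-3/2,-3/2)}$ is a constant polynomial.

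It remains to show this constant vanishes, for which I would evaluate at $x=1$. From \eqref{eq:def-Ar-Br} and $T_k(1)=1$ we get $A_r(1)=\tfrac{r-2}{2}-\tfrac r2=-1$, while $P_r^{(-3/2,-3/2)}(1)=(-\tfrac12)_r/r!=\Gamma(r-\tfrac12)/\bigl(\Gamma(-\tfrac12)\,\Gamma(r+1)\bigr)=-\Gamma(r-\tfrac12)/\bigl(2\sqrt{\pi}\,\Gamma(r+1)\bigr)$ using $\Gamma(-\tfrac12)=-2\sqrt{\pi}$. Hence $c_r\,P_r^{(-3/2,-3/2)}(1)=-1=A_r(1)$, the constant is $0$, and the identity follows.

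The only genuine difficulty is bookkeeping of the Pochhammer/Gamma normalizations and, in particular, the sign: the negative index $-\tfrac32$ makes $P_r^{(-3/2,-3/2)}(1)$ negative, which is precisely what matches $A_r(1)=-1$. One should also record that the hypothesis $r\ge 3$ keeps $P_r^{(-3/2,-3/2)}$ a genuine polynomial of degree exactly $r$ (the Jacobi degree drop at $\alpha+\beta=-r-1$ occurs only for $r=2$), so the derivative matching is not vacuous. An alternative to the derivative computation is to check directly that $A_r$ solves the Jacobi differential equation $(1-x^2)y''+xy'+r(r-2)y=0$; this is transparent in the variable $\theta=\arccos x$, since \eqref{eq:arcos} yields $\frac{d}{d\theta}A_r(\cos\theta)=\bigl(1-(r-1)^2\bigr)\sin\theta\cos(r-1)\theta$, and one then invokes uniqueness of the polynomial eigenfunction together with the value at $x=1$.
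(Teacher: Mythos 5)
Your proof is correct and takes essentially the same route as the paper: both establish $A_r'(x)=r(r-2)\,T_{r-1}(x)$, match this against the Jacobi derivative relation $\tfrac{d}{dx}P_r^{(-3/2,-3/2)}=\tfrac{r-2}{2}P_{r-1}^{(-1/2,-1/2)}$ via the Chebyshev--Jacobi normalization, and then fix the integration constant by evaluating at $x=1$ using $\Gamma(-\tfrac12)=-2\sqrt{\pi}$. The only difference is cosmetic --- you make the telescoping identity $U_{r-1}-U_{r-3}=2T_{r-1}$ explicit where the paper compresses it into ``a short simplification.''
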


\begin{proof}
	Fix $\theta\in\R$ and $x=\cos\theta$. First let us establish a derivative identity for $A_r$.
	Using the classical trigonometric representations \eqref{eq:trigo},
	differentiate $T_r(\cos\theta)=\cos(r\theta)$ with respect to $\theta$ and use
	$\frac{dx}{d\theta}=-\sin\theta$ to obtain the standard derivative formula
	\begin{equation}\label{eq:chebder}
	\frac{d}{dx}T_r(x)=r\,U_{r-1}(x).
	\end{equation}
	Using \eqref{eq:def-Ar-Br}, a direct differentiation and a short simplification yield
	\begin{equation}\label{eq:Arprime}
		\frac{d}{dx}A_r(x)=r(r-2)\,T_{r-1}(x).
	\end{equation}
	Chebyshev polynomials have the following Jacobi representation:
	\begin{equation}\label{eq:T-as-Jacobi}
		T_k(x)
		=
		\frac{k!\sqrt{\pi}}{\Gamma(k+\tfrac12)}\,P_k^{(-1/2,-1/2)}(x),
	\end{equation}
	see \cite[Ch.~22, \S22.5, Eq.~(22.5.31)]{AbramowitzStegun1964} and \cite[Chap. 4]{Szego1975}.
	Applying \eqref{eq:T-as-Jacobi} with $k=r-1$ in \eqref{eq:Arprime} gives
	\begin{equation}\label{eq:Arprime-Jacobi}
		\frac{d}{dx}A_r(x)
		=
		\frac{r(r-2)\,\Gamma(r)\sqrt{\pi}}{\Gamma(r-\tfrac12)}\,
		P_{r-1}^{(-1/2,-1/2)}(x).
	\end{equation}
	For Jacobi polynomials one has the derivative relation
	\[
		\frac{d}{dx}P_r^{(\alpha,\beta)}(x)
		=
		\frac12\,(r+\alpha+\beta+1)\,P_{r-1}^{(\alpha+1,\beta+1)}(x),
	\]
	see \cite[Eq.~(4.5.5)]{Szego1975}.
	With $\alpha=\beta=-\tfrac32$, it becomes
	\begin{equation}\label{eq:Jacobi-derivative-special}
		\frac{d}{dx}P_r^{(-3/2,-3/2)}(x)
		=
		\frac12\,(r-2)\,P_{r-1}^{(-1/2,-1/2)}(x).
	\end{equation}
	Comparing \eqref{eq:Arprime-Jacobi} with \eqref{eq:Jacobi-derivative-special}, we see that
	\[
	\frac{d}{dx} A_r(x)
	=
	\Bigg(\frac{2\sqrt{\pi}\,\Gamma(r+1)}{\Gamma(r-\tfrac12)}\Bigg)\;
	\frac{d}{dx}P_r^{(-3/2,-3/2)}(x).
	\]
	Therefore there exists a constant $c_r\in\R$ such that
	\begin{equation}\label{eq:Ar-affine}
		A_r(x)=\frac{2\sqrt{\pi}\,\Gamma(r+1)}{\Gamma(r-\tfrac12)}\,P_r^{(-3/2,-3/2)}(x)+c_r.
	\end{equation}
	Recall the standard normalization
	\[
	P_r^{(-3/2,-3/2)}(1)
	=
	\frac{\Gamma(r-\tfrac12)}{\Gamma(-\tfrac12)\,\Gamma(r+1)}
	\]
	see \cite[Ch.~22, \S22.2, Table~22.2.1]{AbramowitzStegun1964}.
	Evaluating \eqref{eq:Ar-affine} at $x=1$ gives
	\[
	A_r(1)
	=
	\frac{2\sqrt{\pi}\,\Gamma(r+1)}{\Gamma(r-\tfrac12)}\,
	\frac{\Gamma(r-\tfrac12)}{\Gamma(-\tfrac12)\,\Gamma(r+1)}
	+c_r
	=
	\frac{2\sqrt{\pi}}{\Gamma(-\tfrac12)}+c_r.
	\]
	Using $\Gamma(-\tfrac12)=-2\sqrt{\pi}$, we get  $A_r(1)=-1+c_r$. From \eqref{eq:def-Ar-Br}, it holds  $A_r(1)=-1$ and hence $c_r=0$, and \eqref{eq:Ar-affine} reduces to the desired proportionality.
\end{proof}

	\begin{lemma}[Root distribution]\label{lem:roots}
		The roots of $A_r$ are symmetric with respect to the origin, and zero is a root if $r$ is odd. The roots are all real and simple. Moreover, $r-2$ roots lie inside $(-1,1)$ and 2 roots lie outside $[-1,1]$.
	\end{lemma}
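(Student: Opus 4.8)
The plan is to extract everything from the derivative $A_r'$. By \eqref{eq:Arprime} we have $A_r'(x)=r(r-2)\,T_{r-1}(x)$ (alternatively, differentiate \eqref{eq:def-Ar-Br} using \eqref{eq:chebder} and the identity $U_{r-1}-U_{r-3}=2T_{r-1}$). The polynomial $T_{r-1}$ has exactly $r-1$ simple real zeros, namely $x_j\ceq\cos\theta_j$ with $\theta_j\ceq\frac{(2j-1)\pi}{2(r-1)}$ for $j=1,\dots,r-1$, and they all lie in $(-1,1)$ with $x_1>x_2>\cdots>x_{r-1}$; these are precisely the critical points of $A_r$. The symmetry claims are immediate and I would dispatch them first: since $T_k(-x)=(-1)^kT_k(x)$ and $r,r-2$ have the same parity, \eqref{eq:def-Ar-Br} gives $A_r(-x)=(-1)^rA_r(x)$, so $A_r$ is even or odd with the parity of $r$; in particular its zero set is symmetric about the origin, and $A_r(0)=0$ exactly when $r$ is odd.

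The key computation is the value of $A_r$ at each critical point. Setting $\phi_j\ceq(r-1)\theta_j=\frac{(2j-1)\pi}{2}$, so that $\cos\phi_j=0$ and $\sin\phi_j=(-1)^{j-1}$, and writing $r\theta_j=\phi_j+\theta_j$, $(r-2)\theta_j=\phi_j-\theta_j$, the identity $T_m(\cos\theta_j)=\cos(m\theta_j)$ together with the cosine addition formula gives $T_r(x_j)=(-1)^j\sin\theta_j$ and $T_{r-2}(x_j)=(-1)^{j-1}\sin\theta_j$; hence
\[
A_r(x_j)=\Big(\tfrac{r-2}{2}+\tfrac{r}{2}\Big)(-1)^j\sin\theta_j=(-1)^j(r-1)\sin\theta_j.
\]
Since $\theta_j\in(0,\pi)$ we have $\sin\theta_j>0$, so the critical values $A_r(x_1),\dots,A_r(x_{r-1})$ strictly alternate in sign. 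By the intermediate value theorem, $A_r$ has at least one zero in each of the $r-2$ open intervals $(x_{j+1},x_j)$, $j=1,\dots,r-2$, yielding $r-2$ distinct zeros inside $(x_{r-1},x_1)\subset(-1,1)$.

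Two zeros remain, and I would place them with a monotonicity argument. The leading coefficient of $A_r$ equals $\tfrac{r-2}{2}2^{r-1}=(r-2)2^{r-2}>0$, so $A_r(x)\to+\infty$ as $x\to+\infty$; on $(x_1,\infty)$, beyond the largest zero of $T_{r-1}$, we have $T_{r-1}>0$, hence $A_r'>0$ and $A_r$ is strictly increasing there. As $A_r(x_1)=-(r-1)\sin\theta_1<0$ and $A_r(1)=-1<0$ by \eqref{eq:def-Ar-Br}, the unique zero of $A_r$ in $(x_1,\infty)$ actually lies in $(1,\infty)$; by the parity relation its mirror image lies in $(-\infty,-1)$. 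We have thus exhibited $r-2$ zeros in $(-1,1)$ and $2$ zeros in $\R\setminus[-1,1]$, all distinct since they sit in pairwise disjoint open intervals; as $\deg A_r=r$, this list is complete and every zero is simple, which is the assertion of the lemma. I expect the only genuinely delicate points to be the sign bookkeeping that yields $A_r(x_j)=(-1)^j(r-1)\sin\theta_j$, and the use of monotonicity of $A_r$ past its extreme critical point to conclude that the two outer zeros lie strictly outside $[-1,1]$ rather than merely outside $(x_{r-1},x_1)$.
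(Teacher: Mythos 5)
Your proof is correct, and it takes a genuinely different route from the paper. The paper first identifies $A_r$ (Lemma~\ref{lem:Ar-Jacobi}) with the Jacobi polynomial $P_r^{(-3/2,-3/2)}$ up to a constant, and then invokes Szeg\H{o}'s general theorem on the zero distribution of Jacobi polynomials with arbitrary parameters (Theorem~6.72 in \cite{Szego1975}), carefully evaluating Klein's symbols to read off $N_1=r-2$ zeros in $(-1,1)$ and one zero in each of $(-\infty,-1)$ and $(1,\infty)$. You instead work entirely from the derivative identity $A_r'=r(r-2)T_{r-1}$ of \eqref{eq:Arprime}: the critical points of $A_r$ are the $r-1$ Chebyshev nodes $\cos\frac{(2j-1)\pi}{2(r-1)}$, the computation $A_r(x_j)=(-1)^j(r-1)\sin\theta_j$ (which I checked; the sign bookkeeping is right) shows the critical values strictly alternate, the intermediate value theorem places $r-2$ zeros in $(-1,1)$, and monotonicity of $A_r$ past $x_1$ together with $A_r(1)=-1<0$ pushes the remaining zero beyond $1$, with its mirror image beyond $-1$ by parity; the count $\deg A_r=r$ then forces simplicity and completeness. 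Your argument is elementary and self-contained where the paper's is citation-heavy, and it would even render Lemma~\ref{lem:Ar-Jacobi} dispensable for the main theorem; what the paper's route buys is that it names the structural reason ($A_r$ is an ultraspherical polynomial with parameter $-\tfrac32$, outside the classical range) for the two exterior roots, which is conceptually illuminating even if not logically necessary. The one place to be slightly careful in your write-up is the final counting step: the IVT only gives \emph{at least} one zero per interval, so you should say explicitly (as you essentially do) that the $r-2+2=r$ distinct zeros exhaust the degree, which retroactively shows there is exactly one zero per interval and that all are simple.
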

	
	\begin{proof}
				Since $T_k(-x)=(-1)^k T_k(x)$, the definition \eqref{eq:def-Ar-Br} gives
		\[
		A_r(-x)=\frac{r-2}{2}(-1)^rT_r(x)-\frac{r}{2}(-1)^{r-2}T_{r-2}(x)
		=(-1)^rA_r(x)
		\]
		which shows that the roots are symmetric with respect to the origin, and also that $A_r(0)=0$ if $r$ is odd.
By Lemma~\ref{lem:Ar-Jacobi}, $A_r$ and $P_r^{(-3/2,-3/2)}$ have the same roots. Then we apply \cite[Thm.~6.72]{Szego1975} with the notations $\alpha=\beta=-\tfrac32$ and $n:=r$.
First, the excluded cases (6.72.1)--(6.72.3) do not occur: $\alpha,\beta$ are not negative
integers, and $n+\alpha+\beta=n-3\ge 0$ for $n\ge 3$. Hence the roots are different from
$\pm1$ and $\infty$, and (by the discussion following (6.72.3)) are distinct.
Next, compute the integers $X,Y,Z$ from (6.72.5) using Klein's symbol $E(\cdot)$ in (6.72.4).
Here $2n+\alpha+\beta+1=2n-2>0$ and $|\alpha|=|\beta|=\tfrac32$, so
\[
X=E\!\left(\frac12\bigl((2n-2)-\tfrac32-\tfrac32+1\bigr)\right)=E(n-2)=n-3,
\qquad
Y=Z=E\!\left(\frac12(-2n+3)\right)=0.
\]
Moreover,
\[
\binom{n+\alpha}{n}=\binom{n-\tfrac32}{n}
=\frac{\Gamma(n-\tfrac12)}{\Gamma(n+1)\Gamma(-\tfrac12)}<0,
\qquad
\binom{2n+\alpha+\beta}{n}=\binom{2n-3}{n}>0.
\]
Therefore, in (6.72.6) we obtain $N_1=n-2$ (both parity cases give the same result),
while in (6.72.7)--(6.72.8) we are in the strictly negative branches and get $N_2=N_3=1$.
Thus $P_n^{(-3/2,-3/2)}$ has exactly $n-2$ roots in $(-1,1)$, one in $(-\infty,-1)$
and one in $(1,\infty)$, and they are all simple.
	\end{proof}
	
	Based on Lemma \ref{lem:roots}, denoting by $\{x_i\}_{i=1}^r$  the roots of $A_r$, their pattern is as follows:
		\[
		\begin{array}{ll}
			r\ \text{even:}& 0<x_1<x_2<\cdots<x_{\frac{r}{2}-1}<1<x_{\frac{r}{2}}=:x_{\mathrm{out}},\\[0.25em]
			r\ \text{odd:}& 0=x_1<x_2<\cdots<x_{\frac{r-1}{2}}<1<x_{\frac{r+1}{2}}=:x_{\mathrm{out}}.
		\end{array}
		\]
	Graphs of polynomials $A_r$ with their root distribution are displayed on Figure \ref{fig:stengle_factors}.
\begin{figure}[h!]
	\centering
	\includegraphics[width=\textwidth]{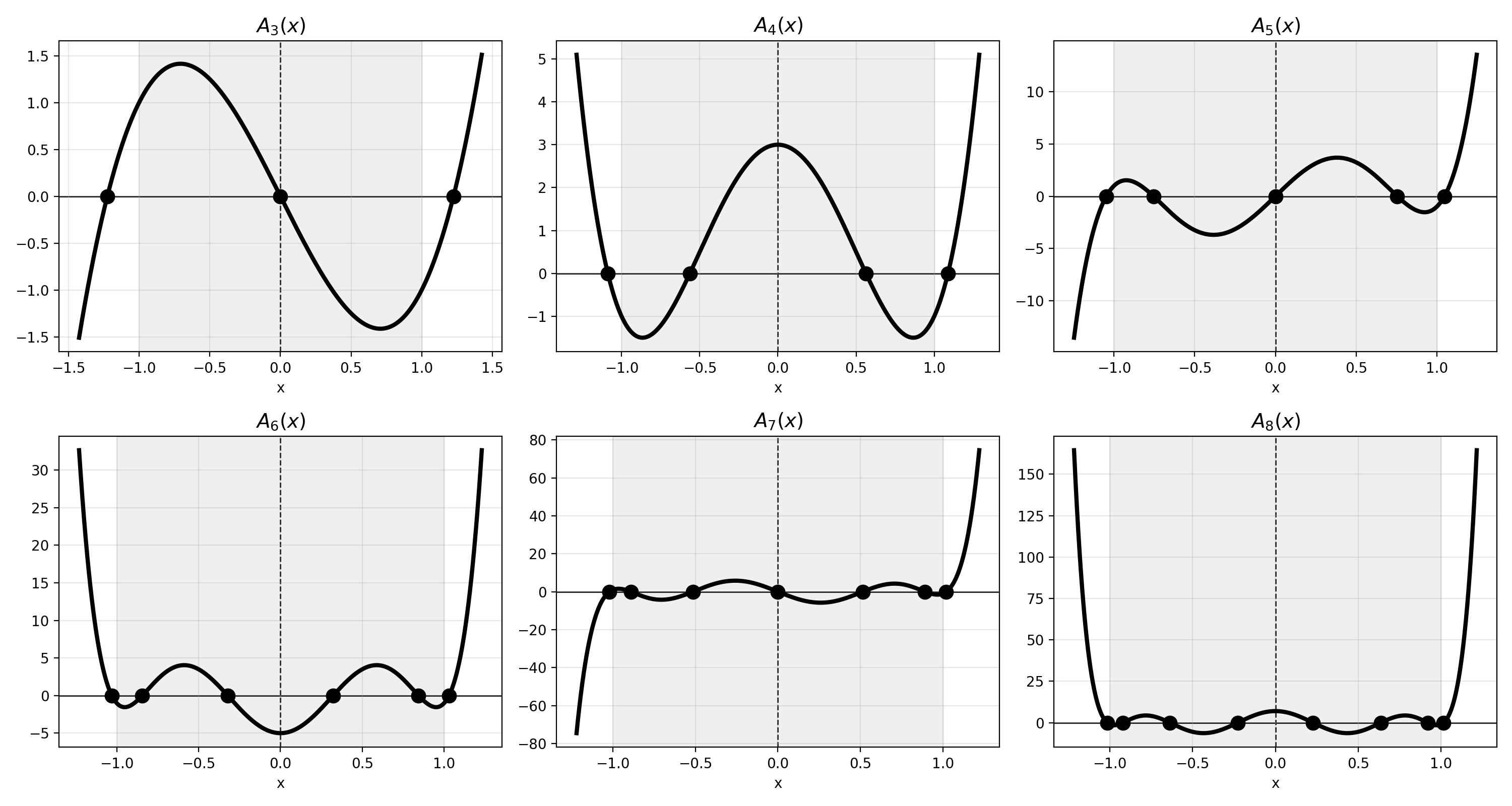}
	\caption{Graphs of polynomials $A_r$ (thick black lines) and root distribution (thick black dots) for each relaxation order $r\in\{3,\dots,8\}$. We observe that 2 of the $r$ roots are outside the interval $[-1,1]$ (light gray region).}
	\label{fig:stengle_factors}
\end{figure}	
	\subsection{The atomic measure and its basic moment identities}
	
	With $\{x_i\}_{i=1}^r$ denoting the roots of $A_r$, define the finitely atomic measure
	\begin{equation}\label{eq:mu}
		\mu(dx)\ceq \frac{1}{(r(r-2))^2}\sum_{i=1}^r \frac{1}{(1-x_i^2)^2}\,\delta_{x_i}(dx).
	\end{equation}
	Let $y=\{y_k\}_{k=0}^{2r}$ be its moment sequence: $y_k=\int x^k\,\mu(dx)$.
	
	\begin{lemma}[Two residue identities]\label{lem:residue-sums}
		It holds
		\begin{equation}\label{eq:sum-1over1mx2sq}
			\sum_{i=1}^r \frac{1}{(1-x_i^2)^2} = (r(r-2))^2,
		\end{equation}
		and
		\begin{equation}\label{eq:sum-1over1mx2}
			\sum_{i=1}^r \frac{1}{1-x_i^2} = -r(r-2).
		\end{equation}
	\end{lemma}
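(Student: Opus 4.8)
\emph{Plan.} The idea is to reduce both sums to the values of the logarithmic derivative $L\ceq A_r'/A_r$ and of $L'$ at the single point $x=1$, using the parity of $A_r$ and the explicit Chebyshev evaluations already available.

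\emph{Step 1 (logarithmic derivative).} By Lemma~\ref{lem:roots}, $A_r$ has degree $r$ and exactly $r$ distinct real roots $x_1,\dots,x_r$, none of which is $\pm1$. Hence $L(x)=A_r'(x)/A_r(x)$ is a proper rational function with only simple poles, and
\[
L(x)=\sum_{i=1}^r\frac{1}{x-x_i},\qquad -L'(x)=\sum_{i=1}^r\frac{1}{(x-x_i)^2},
\]
while differentiating $L=A_r'/A_r$ directly gives $L'=A_r''/A_r-L^2$. The parity relation $A_r(-x)=(-1)^rA_r(x)$ from Lemma~\ref{lem:roots} shows $L$ is odd and $L'$ even, so $L(-1)=-L(1)$ and $L'(-1)=L'(1)$; thus we only need the two numbers $L(1)$ and $L'(1)$.

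\emph{Step 2 (evaluation at $x=1$).} From \eqref{eq:def-Ar-Br} and $T_k(1)=1$ we get $A_r(1)=-1$. From \eqref{eq:Arprime}, $A_r'(x)=r(r-2)T_{r-1}(x)$, so $A_r'(1)=r(r-2)$ and $L(1)=-r(r-2)$. Differentiating $A_r'$ once more and using \eqref{eq:chebder} in the form $T_{r-1}'=(r-1)U_{r-2}$ gives $A_r''(x)=r(r-1)(r-2)\,U_{r-2}(x)$; with $U_{r-2}(1)=r-1$ this yields $A_r''(1)=r(r-1)^2(r-2)$, hence
\[
L'(1)=\frac{A_r''(1)}{A_r(1)}-L(1)^2=-\,r(r-1)^2(r-2)-\bigl(r(r-2)\bigr)^2.
\]

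\emph{Step 3 (partial fractions and assembly).} Using the elementary decompositions
\[
\frac{1}{1-t^2}=\frac12\Bigl(\frac{1}{1-t}+\frac{1}{1+t}\Bigr),\qquad
\frac{1}{(1-t^2)^2}=\frac14\Bigl(\frac{1}{(1-t)^2}+\frac{1}{1-t}+\frac{1}{(1+t)^2}+\frac{1}{1+t}\Bigr),
\]
together with $\sum_i(1\mp x_i)^{-1}=\pm L(\pm1)$ and $\sum_i(1\mp x_i)^{-2}=-L'(\pm1)$, one gets
\[
\sum_{i=1}^r\frac{1}{1-x_i^2}=\tfrac12\bigl(L(1)-L(-1)\bigr)=L(1)=-r(r-2),
\]
which is \eqref{eq:sum-1over1mx2}, and, using $L(-1)=-L(1)$ and $L'(-1)=L'(1)$,
\[
\sum_{i=1}^r\frac{1}{(1-x_i^2)^2}=\tfrac14\bigl(-L'(1)+L(1)-L'(-1)-L(-1)\bigr)=\tfrac12\bigl(L(1)-L'(1)\bigr)=\tfrac{r(r-2)}{2}\bigl((r-1)^2+r(r-2)-1\bigr),
\]
which equals $\bigl(r(r-2)\bigr)^2$ since $(r-1)^2+r(r-2)-1=2r(r-2)$. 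This is \eqref{eq:sum-1over1mx2sq}.

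\emph{Main obstacle.} There is no deep difficulty; the computation is mechanical once the parity of $L$ is observed (this is what halves the work and makes the final algebraic collapse $(r-1)^2+r(r-2)-1=2r(r-2)$ transparent). The only place to be careful is keeping track of which identities among \eqref{eq:def-Ar-Br}, \eqref{eq:Arprime}, \eqref{eq:chebder} are needed to produce $A_r''$, and confirming via Lemma~\ref{lem:roots} that $x=\pm1$ are genuinely not roots of $A_r$ so that $L$ and $L'$ are regular there. (Alternatively, one could obtain the same values by summing residues of $A_r'(x)\big/\!\bigl(A_r(x)(1-x^2)^k\bigr)$ and using that this function is $O(x^{-4-k})$ at infinity, but the logarithmic-derivative route above is shorter.)
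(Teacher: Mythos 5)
Your proof is correct, and it reaches the two identities by a genuinely more elementary route than the paper. The paper integrates $F(z)=\frac{A_r'(z)}{A_r(z)}\frac{1}{(1-z^2)^2}$ and $H(z)=\frac{A_r'(z)}{A_r(z)}\frac{1}{1-z^2}$ over a large circle and uses the residue theorem (the sum of all residues vanishes because $F=O(z^{-5})$ and $H=O(z^{-3})$ at infinity); the residues at the simple poles $x_i$ produce the desired sums, while the residues at the double/simple poles $\pm1$ supply the constants. You instead use the purely algebraic identity $A_r'/A_r=\sum_i (x-x_i)^{-1}$ together with elementary partial fractions of $(1-t^2)^{-1}$ and $(1-t^2)^{-2}$, which reduces everything to $L(1)$ and $L'(1)$; the parity of $L$ then halves the work. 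The two arguments are computationally parallel --- your $L(1)$, $L'(1)$ are exactly the paper's $G(1)$, $G'(1)$, and indeed $G'(1)-G(1)=L'(1)-L(1)=-2r^2(r-2)^2$ reappears in your final collapse $(r-1)^2+r(r-2)-1=2r(r-2)$ --- but yours dispenses with contour integration and decay estimates entirely, at the price of writing out the partial-fraction coefficients. All the ingredients you invoke (simplicity and reality of the roots, $x_i\neq\pm1$, $A_r(1)=-1$, $A_r'=r(r-2)T_{r-1}$, $T_{r-1}'=(r-1)U_{r-2}$, $U_{r-2}(1)=r-1$) are available from Lemma~\ref{lem:roots} and equations \eqref{eq:def-Ar-Br}, \eqref{eq:chebder}, \eqref{eq:Arprime}, and I verified the sign conventions in your compressed formulas $\sum_i(1\mp x_i)^{-1}=\pm L(\pm1)$ and $\sum_i(1\mp x_i)^{-2}=-L'(\pm1)$ as well as the final arithmetic; everything checks out.
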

	
\begin{proof}
	The proof uses the classical residue formulas of complex analysis \cite[Chap. 3, Sec. 2]{Stein2003}. 
	Let $F$ be a holomorphic function on a punctured neighborhood $\{z \in \C : 0<|z-a|<r\}$ and admit a Laurent expansion
	\[
	F(z)=\sum_{k=-m}^{\infty} f_k (z-a)^k .
	\]
	The \emph{residue} of $F$ at $a$ is the coefficient of $(z-a)^{-1}$ in this expansion, i.e.
	\[
	\Res_{z=a} F(z) := f_{-1}.
	\]
	Equivalently, if $a$ is a pole of order $n$ of $F$, then by \cite[Ch.~3, Thm.~1.4]{Stein2003},
	\begin{equation}\label{eq:resn}
	\Res_{z=a} F(z)
	=\lim_{z\to a}\frac{1}{(n-1)!}\,\frac{d^{\,n-1}}{dz^{\,n-1}}\Big((z-a)^n F(z)\Big).
	\end{equation}
	In particular, for a simple pole ($n=1$),
	\begin{equation}\label{eq:res}
	\Res_{z=a} F(z)=\lim_{z\to a}(z-a)F(z).
	\end{equation}
	
	\medskip
	\noindent{1) Proof of \eqref{eq:sum-1over1mx2sq}.}
	Let
	\[
	F(z):=\frac{A_r'(z)}{A_r(z)}\frac{1}{(1-z^2)^2},
	\qquad
	G(z):=\frac{A_r'(z)}{A_r(z)}.
	\]
	Let $\mathcal P \subset \R$ denote the poles of $F$, which are the (simple) zeros $x_i$ of $A_r$ and the points $\pm 1$,
	which are poles of order $2$.

	Let $\Gamma_R=\{z \in \C:|z|=R\}$ with $R>0$ large enough so that all poles lie inside.
	By the residue formula \cite[Ch.~3, \S2]{Stein2003},
	\[
	\int_{\Gamma_R}F(z)\,dz = 2\pi i \sum_{a \in \mathcal P}\Res_{z=a}F(z).
	\]
	As $z\to\infty$, $G(z)=O(1/z)$ and $(1-z^2)^{-2}=O(1/z^4)$, hence $F(z)=O(1/z^5)$.
	Thus $\int_{\Gamma_R}F(z)\,dz\to 0$ as $R\to\infty$, and we obtain
	\begin{equation}\label{eq:sumresFzero-SS-short}
		\sum_{a \in \mathcal P}\Res_{z=a}F(z)=0.
	\end{equation}
	
	Each $x_i$ is a simple zero of $A_r$, so $A_r'(x_i)\neq 0$ and $F$ has a simple pole at $x_i$.
	Using \eqref{eq:res}:
	\[
	\Res_{z=x_i}F(z)=\lim_{z\to x_i}(z-x_i)F(z)=\frac{1}{(1-x_i^2)^2}.
	\]
	At $z=1$, $F$ has a pole of order $2$, hence from \eqref{eq:resn}:
	\[
	\Res_{z=1}F(z)=\lim_{z\to 1}\frac{d}{dz}\Big((z-1)^2F(z)\Big) =
	\left.\frac{d}{dz}\Big(\frac{G(z)}{(1+z)^2}\Big)\right|_{z=1}
	=\frac{G'(1)-G(1)}{4}.
	\]
	Similarly,
	\[
	\Res_{z=-1}F(z)=\lim_{z\to -1}\frac{d}{dz}\Big((z+1)^2F(z)\Big)
	=\left.\frac{d}{dz}\Big(\frac{G(z)}{(1-z)^2}\Big)\right|_{z=-1}
	=\frac{G'(-1)+G(-1)}{4}.
	\]	
	Now $A_r(1)=-1$, $A_r(-1)=-(-1)^r$ and from \eqref{eq:Arprime} we have $A_r'(1)=r(r-2)$, $A_r'(-1)=r(r-2)(-1)^{r-1}$ which implies
	\[
	G(1)=-r(r-2), \qquad G(-1)=r(r-2).
	\]
	Moreover $G'(z)=A_r''(z)/A_r(z)-(A_r'(z)/A_r(z))^2$.
	Using \eqref{eq:chebder} and \eqref{eq:Arprime} we have $A_r'(x)=r(r-2)T_{r-1}(x)$, $T_{r-1}'(x)=(r-1)U_{r-2}(x)$, and
	$U_{r-2}(1)=r-1$, $U_{r-2}(-1)=(-1)^{r-2}(r-1)$. So we get
	$A_r''(1)=r(r-2)(r-1)^2$,
	$A_r''(-1)=r(r-2)(-1)^r(r-1)^2$,
	hence
	\[
	G'(1)= -r(r-2)(r-1)^2-r^2(r-2)^2,\qquad
	G'(-1)= -r(r-2)(r-1)^2-r^2(r-2)^2.
	\]
	Therefore $G'(1)-G(1)=-2r^2(r-2)^2$ and $G'(-1)+G(-1)=-2r^2(r-2)^2$, so
	\[
	\Res_{z=1}F(z)=\Res_{z=-1}F(z)=-\frac{r^2(r-2)^2}{2}.
	\]
	
	Substituting the residues into \eqref{eq:sumresFzero-SS-short} yields
	\[
		\sum_{a \in \mathcal P}\Res_{z=a}F(z) = \sum_{i=1}^r\frac{1}{(1-x_i^2)^2}
	-\frac{r^2(r-2)^2}{2}-\frac{r^2(r-2)^2}{2}=0,
	\]
	which is \eqref{eq:sum-1over1mx2sq}.
	
	\medskip
	\noindent{2) Proof of \eqref{eq:sum-1over1mx2}.}
	Define
	\[
	H(z):=\frac{A_r'(z)}{A_r(z)}\frac{1}{1-z^2}=G(z)\frac{1}{(1-z)(1+z)}.
	\]
	The poles are simple at $x_i$ and at $\pm1$.
	As $z\to\infty$, $H(z)=O(1/z^3)$, so the same contour argument gives
	\[
	\sum_{a\in\mathcal P}\Res_{z=a}H(z)=0.
	\]
	By \eqref{eq:res}:
	\[
	\Res_{z=x_i}H(z)=\lim_{z\to x_i}(z-x_i)H(z)=\frac{1}{1-x_i^2}.
	\]
	Also,
	\[
	\Res_{z=1}H(z)=\lim_{z\to 1}(z-1)H(z)=\lim_{z\to 1}G(z)\frac{z-1}{(1-z)(1+z)}
	=-\frac{G(1)}{2}=\frac{r(r-2)}{2},
	\]
	and
	\[
	\Res_{z=-1}H(z)=\lim_{z\to -1}(z+1)H(z)=\lim_{z\to -1}G(z)\frac{z+1}{(1-z)(1+z)}
	=\frac{G(-1)}{2}=\frac{r(r-2)}{2}.
	\]
	Thus
	\[
		\sum_{a \in \mathcal P}\Res_{z=a}H(z) =  \sum_{i=1}^r\frac{1}{1-x_i^2}+\frac{r(r-2)}{2}+\frac{r(r-2)}{2}=0,
	\]
	which is \eqref{eq:sum-1over1mx2}.
\end{proof}

	\begin{lemma}[Probability and objective value]\label{lem:prob-and-obj}
		The measure $\mu$ in \eqref{eq:mu} satisfies
		\[
		y_0 = \int d\mu(x) = 1, \qquad
		\ell_y(1-x^2) = \int (1-x^2)\,d\mu(x) = -\frac{1}{r(r-2)}.
		\]
	\end{lemma}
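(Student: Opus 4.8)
The plan is to read off both identities by substituting the definition \eqref{eq:mu} of $\mu$ and invoking the two residue sums of Lemma~\ref{lem:residue-sums}; no further analytic work is needed, since all the content has been absorbed into that lemma. First I would note that the measure is well-defined: by Lemma~\ref{lem:roots} every root $x_i$ of $A_r$ is real and simple, and none equals $\pm1$, so each weight $\frac{1}{(1-x_i^2)^2}$ is a finite positive number and $\mu$ is a genuine finitely atomic (positive) measure.

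For the total mass, since $\mu$ is supported on the finite set $\{x_i\}_{i=1}^r$, integrating the constant $1$ gives
\[
y_0=\int d\mu=\frac{1}{(r(r-2))^2}\sum_{i=1}^r\frac{1}{(1-x_i^2)^2}.
\]
Applying the first residue identity \eqref{eq:sum-1over1mx2sq}, the inner sum equals $(r(r-2))^2$, so $y_0=1$.

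For the objective value, integrate $x\mapsto 1-x^2$ against $\mu$ and cancel one factor of $1-x_i^2$ in each summand:
\[
\ell_y(1-x^2)=\int(1-x^2)\,d\mu
=\frac{1}{(r(r-2))^2}\sum_{i=1}^r\frac{1-x_i^2}{(1-x_i^2)^2}
=\frac{1}{(r(r-2))^2}\sum_{i=1}^r\frac{1}{1-x_i^2}.
\]
Now the second residue identity \eqref{eq:sum-1over1mx2} gives $\sum_{i=1}^r\frac{1}{1-x_i^2}=-r(r-2)$, hence $\ell_y(1-x^2)=-\frac{1}{r(r-2)}$, as claimed.

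There is no real obstacle here: the lemma is a two-line bookkeeping consequence of Lemma~\ref{lem:residue-sums}, and the only point worth stating explicitly is the legitimacy of dividing by $1-x_i^2$, which is guaranteed by the root-location statement in Lemma~\ref{lem:roots}. The genuine difficulty — establishing the two rational-function identities via the contour/residue argument — has already been carried out.
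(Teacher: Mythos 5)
Your proof is correct and follows exactly the paper's own argument: substitute the definition \eqref{eq:mu} and apply the two residue identities \eqref{eq:sum-1over1mx2sq} and \eqref{eq:sum-1over1mx2} from Lemma~\ref{lem:residue-sums}. The extra remark on well-definedness of the weights (via Lemma~\ref{lem:roots}) is a harmless and reasonable addition.
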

	
	\begin{proof}
		The mass is
		\[
		\int d\mu(x) = \frac{1}{(r(r-2))^2}\sum_{i=1}^r \frac{1}{(1-x_i^2)^2}=1
		\]
		by \eqref{eq:sum-1over1mx2sq}.
		Next,
		\[
		\int (1-x^2)\,d\mu(x)
		=\frac{1}{(r(r-2))^2}\sum_{i=1}^r \frac{1-x_i^2}{(1-x_i^2)^2}
		=\frac{1}{(r(r-2))^2}\sum_{i=1}^r \frac{1}{1-x_i^2}
		= -\frac{1}{r(r-2)}
		\]
		by \eqref{eq:sum-1over1mx2}.
	\end{proof}

\subsection{Positivity of the moment matrix}

\begin{lemma}\label{lem:moment-matrix-psd}
	Let $\mu$ be the measure \eqref{eq:mu} with moment vector $y$. Then its moment matrix of order $r$  is positive semidefinite: 
	\[
	M_r(y)\succeq 0.
	\]
\end{lemma}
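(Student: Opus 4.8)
The plan is to observe that $\mu$ is, by construction, a genuine nonnegative measure, so that $M_r(y)$ is the truncated moment matrix of a positive measure and hence automatically positive semidefinite; the only point needing (a trivial) verification is that all the atomic weights are positive.

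First I would confirm positivity of the weights in \eqref{eq:mu}: the mass assigned to the atom $x_i$ is $\dfrac{1}{(r(r-2))^2(1-x_i^2)^2}$, and since $r\ge 3$ gives $r(r-2)\ge 3\neq 0$ while Lemma~\ref{lem:roots} guarantees $x_i\ne\pm 1$ (each root lies either in $(-1,1)$ or strictly outside $[-1,1]$), this is a finite positive number. Thus $\mu$ is a nonnegative, finitely atomic Borel measure on $\R$ with moment sequence $y_k=\int x^k\,d\mu$. Then, using the defining identity \eqref{eq:mommat}, for an arbitrary $p\in\R[x]_r$ with coefficient vector $p\in\R^{r+1}$,
\[
p^\top M_r(y)\,p=\ell_y\bigl(p(x)^2\bigr)=\int p(x)^2\,d\mu(x)=\frac{1}{(r(r-2))^2}\sum_{i=1}^r\frac{p(x_i)^2}{(1-x_i^2)^2}\ \ge\ 0,
\]
each summand being nonnegative. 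Since $p$ was arbitrary, $M_r(y)\succeq 0$.

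I do not anticipate a real obstacle here: the whole content is that the atoms avoid $\pm1$, which is exactly what Lemma~\ref{lem:roots} supplies. (Note that $M_r(y)$ is in fact singular, of rank $r$, because $\mu$ has only $r$ atoms; but only semidefiniteness is required.) The genuinely delicate positivity statement is the localizing-matrix condition $M_{r-3}(g\,y)\succeq 0$, since $g(x)=(1-x^2)^3$ is strictly negative at the two atoms $\pm x_{\mathrm{out}}$ lying outside $[-1,1]$ — but that is the subject of a separate lemma and calls for a different argument.
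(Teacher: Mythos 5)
Your proof is correct and follows essentially the same route as the paper: identify $p^\top M_r(y)p=\ell_y(p^2)=\int p^2\,d\mu\ge 0$ using the nonnegativity of the atomic measure $\mu$. The extra check that the weights are positive (atoms avoid $\pm1$ by Lemma~\ref{lem:roots}) and the remark that the real difficulty lies in the localizing matrix are both accurate.
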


\begin{proof}
Given any polynomial $p \in \R[x]_r$, 
using the definition  \eqref{eq:mommat} of the moment matrix:
\[
p^\top M_r(y)\,p =  \ell_y(p^2) = 
\int p(x)^2\,d\mu(x).
\]
Since $\mu$ is a nonnegative measure, it holds
\[
\int p(x)^2\,d\mu(x)\ge 0,
\]
which implies $M_r(y)\succeq 0$.
\end{proof}

\subsection{Positivity of the localizing matrix}

\begin{lemma}\label{lem:psd-localizing}
	Let $\mu$ be the measure \eqref{eq:mu} with moment vector $y$. Then its localizing moment matrix of order $r-3$ is positive semidefinite: 
	\[
	M_{r-3}(g y)\succeq 0.
	\]
\end{lemma}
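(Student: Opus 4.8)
Plan. The localizing matrix $M_{r-3}(g\,y)$ is the Gram matrix of the monomial basis $1,x,\dots,x^{r-3}$ for the bilinear form $(q_1,q_2)\mapsto\ell_y(g\,q_1q_2)$. By \eqref{eq:locmat}, $g=(1-x^2)^3$, and the form \eqref{eq:mu} of $\mu$, one power of $(1-x_i^2)^2$ cancels the weight, so for every $q\in\R[x]_{r-3}$
\[
q^\top M_{r-3}(g\,y)\,q=\int(1-x^2)^3q(x)^2\,d\mu(x)=\frac{1}{(r(r-2))^2}\sum_{i=1}^r(1-x_i^2)\,q(x_i)^2 .
\]
Thus the claim is equivalent to $\sum_{i=1}^r(1-x_i^2)q(x_i)^2\ge0$ for all $q\in\R[x]_{r-3}$. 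Put $D:=\operatorname{diag}(1-x_1^2,\dots,1-x_r^2)\in\R^{r\times r}$ and let $\operatorname{ev}\colon\R[x]_{r-3}\to\R^r$, $q\mapsto(q(x_i))_{i=1}^r$. A nonzero polynomial of degree $\le r-3$ cannot vanish at the $r$ distinct nodes, so $\operatorname{ev}$ is injective, $V:=\operatorname{ev}(\R[x]_{r-3})$ has dimension $r-2$, and the claim becomes $D|_V\succeq0$. By Lemma~\ref{lem:roots} no $x_i$ equals $\pm1$, so $D$ is invertible, and it has $r-2$ positive and $2$ negative diagonal entries, i.e.\ $\operatorname{In}(D)=(r-2,2,0)$ (writing $\operatorname{In}(\cdot)=(n_+,n_-,n_0)$ for the inertia).

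Next I would pin down $D|_V$ via a bordered-matrix inertia identity. To describe $V^\perp$: by partial fractions, $\sum_{i=1}^r p(x_i)/A_r'(x_i)=-\Res_{z=\infty}\bigl(p(z)/A_r(z)\bigr)$, which vanishes as soon as $\deg p\le r-2$. Applying this with $p=q$ and with $p=xq$, $q\in\R[x]_{r-3}$, shows that
\[
u:=\Bigl(\tfrac1{A_r'(x_i)}\Bigr)_{i=1}^r ,\qquad w:=\Bigl(\tfrac{x_i}{A_r'(x_i)}\Bigr)_{i=1}^r
\]
are orthogonal to $V$; they are linearly independent (the $x_i$ are distinct and $A_r'(x_i)\neq0$), hence span the $2$-dimensional space $V^\perp$. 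Let $B\in\R^{2\times r}$ have rows $u^\top,w^\top$, so $V=\ker B$. Then the standard identity
\[
\operatorname{In}\begin{pmatrix}D & B^\top\\ B & 0\end{pmatrix}=\operatorname{In}(D|_V)+(2,2,0)=\operatorname{In}(D)+\operatorname{In}(-BD^{-1}B^\top)
\]
gives $\operatorname{In}(D|_V)=(r-4,0,0)+\operatorname{In}(-BD^{-1}B^\top)$.

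It remains to compute $BD^{-1}B^\top$. Using $A_r'=r(r-2)T_{r-1}$ from \eqref{eq:Arprime}, its off-diagonal entry $\sum_i\frac{x_i}{A_r'(x_i)^2(1-x_i^2)}$ vanishes by the $x\mapsto-x$ symmetry of the roots (Lemma~\ref{lem:roots}), and the diagonal entries are $S_m:=\sum_i\frac{x_i^{2m}}{A_r'(x_i)^2(1-x_i^2)}$, $m=0,1$. I would evaluate $S_0,S_1$ by residues of $\Phi_m(z):=z^{2m}\big/\bigl(A_r(z)A_r'(z)(1-z^2)\bigr)$. Since $A_r$ has simple roots (Lemma~\ref{lem:roots}), the three factors $A_r,A_r',1-z^2$ have pairwise disjoint simple zeros — the $x_i$, the $\xi_j:=\cos\frac{(2j-1)\pi}{2(r-1)}$ ($j=1,\dots,r-1$, the roots of $T_{r-1}$), and $\pm1$ — and $\Phi_m(z)=O(z^{-2})$ at infinity (denominator of degree $2r+1$), so the total residue is $0$. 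At $\pm1$ one uses $A_r(\pm1)=-(\pm1)^r$ and $A_r'(\pm1)=r(r-2)(\pm1)^{r-1}$; at $\xi_j$ one uses the trigonometric form \eqref{eq:arcos} of $A_r$ (whence $A_r(\xi_j)=-(r-1)(-1)^{j-1}\sin\tfrac{(2j-1)\pi}{2(r-1)}$) together with $A_r''=r(r-1)(r-2)U_{r-2}$ (differentiate \eqref{eq:Arprime}, use \eqref{eq:chebder}) and \eqref{eq:trigo}, which yield $\Res_{\xi_j}\Phi_m=-\xi_j^{2m}\big/\bigl(r(r-1)^2(r-2)(1-\xi_j^2)\bigr)$; summing over $j$ with the elementary identity $\sum_{j=1}^{n}\csc^2\tfrac{(2j-1)\pi}{2n}=n^2$ (at $n=r-1$) and solving the vanishing-of-total-residue relation for the $x_i$-part gives
\[
S_0=\sum_i\frac{1}{A_r'(x_i)^2(1-x_i^2)}=0,\qquad S_1=\sum_i\frac{x_i^2}{A_r'(x_i)^2(1-x_i^2)}=-\frac{1}{r(r-1)(r-2)} .
\]
Hence $-BD^{-1}B^\top=\operatorname{diag}\bigl(0,\ \tfrac1{r(r-1)(r-2)}\bigr)\succeq0$ with inertia $(1,0,1)$ for $r\ge3$, so $\operatorname{In}(D|_V)=(r-4,0,0)+(1,0,1)=(r-3,0,1)$. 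In particular $D|_V$ has no negative eigenvalue, so $\sum_i(1-x_i^2)q(x_i)^2\ge0$ for every $q\in\R[x]_{r-3}$, i.e.\ $M_{r-3}(g\,y)\succeq0$ — with a one-dimensional kernel, which one checks is spanned by $B_r$, matching the complementarity heuristic. The step I expect to be the main obstacle is the residue bookkeeping: correctly evaluating $A_r,A_r',A_r''$ at the $\xi_j$ (tracking the alternating signs $(-1)^{j-1}=\sin\tfrac{(2j-1)\pi}{2}$) and at $\pm1$, and assembling the three families of residues; once $S_0,S_1$ are known the inertia algebra is routine, provided one has checked that $B$ has full row rank and $D$ is invertible so that both forms of the bordered-matrix identity apply.
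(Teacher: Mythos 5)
Your proof is correct, and it takes a genuinely different route from the paper's. The paper splits the atoms into the $r-2$ inner nodes (whose moment matrix $H$ is positive definite) and the two outer nodes, writes $M_{r-3}(g\,y)=H-VV^\top$, and reduces via a Schur complement to showing that the $2\times 2$ matrix $G=V^\top H^{-1}V$ satisfies $G\preceq I_2$; this is done by exhibiting the explicit kernel vector $B_r=P_{r-3}^{(2)}$ of the localizing matrix (proved by a residue argument on $z^k/A_r$), which forces $1$ to be an eigenvalue of $G$, and then using the Christoffel--Darboux formula to determine the sign of the off-diagonal entry of $G$ so that $1$ is the \emph{larger} eigenvalue. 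You instead work with the diagonal form $D=\operatorname{diag}(1-x_i^2)$ restricted to the evaluation subspace $V$, describe $V^\perp$ by the two residue vectors $(1/A_r'(x_i))_i$ and $(x_i/A_r'(x_i))_i$, and compute the full inertia of $D|_V$ via the bordered-matrix/Haynsworth identity, reducing everything to the two sums $S_0,S_1$. I checked the key computations: the three factors $A_r$, $A_r'=r(r-2)T_{r-1}$, $1-z^2$ indeed have disjoint simple zeros; $A_r(\xi_j)A_r''(\xi_j)=-r(r-1)^2(r-2)$ (the alternating signs and the $\sin\theta_j$ cancel); the boundary residues each equal $1/(2r(r-2))$; and with $\sum_{j=1}^{r-1}\csc^2\theta_j=(r-1)^2$ one gets $S_0=0$ and $S_1=-1/(r(r-1)(r-2))$, hence $\operatorname{In}(D|_V)=(r-3,0,1)$. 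Your approach is computationally heavier on the residue side (three families of poles instead of one) but buys more: it needs no prior guess of the kernel vector $B_r$ and no Christoffel--Darboux sign argument, and it delivers the exact rank and nullity of the localizing matrix as a byproduct. Two presentational nits: the intermediate triple $(r-4,0,0)$ has a negative entry when $r=3$ (the final inertia $(0,0,1)$ is still correct --- $D|_V$ is then the zero form on a one-dimensional space --- but you should combine the two identities before splitting off that term); and you should state explicitly that the bordered-matrix identity in its two forms requires $B$ of full row rank and $D$ nonsingular, both of which you do verify.
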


\begin{proof}
	Set $n:=r-3$ and write $b(x):=(1,x,\dots,x^n)^\top$.
	By \eqref{eq:locmat}, $M_n(g\,y)\succeq0$ is equivalent to
	\[
	\int (1-x^2)^3 p(x)^2\,d\mu(x)\ge 0\qquad\forall\,p\in\mathbb R[x]_n.
	\]
	Using \eqref{eq:mu} it holds
	\[
	\int (1-x^2)^3 p(x)^2\,d\mu(x)
	=\sum_{i=1}^r \frac{1-x_i^2}{(r(r-2))^2}\,p(x_i)^2.
	\]
	Form Lemma \ref{lem:roots} we know that among the $r$ roots of $A_r$, exactly $r-2$ satisfy $|x_i|<1$ and two are $\pm x_{\rm out}$ with
	$x_{\rm out}>1$. Define the positive weights
	\[
	w_i:=\frac{1-x_i^2}{(r(r-2))^2}>0\quad(|x_i|<1),\qquad
	w_{\rm out}:=\frac{x_{\rm out}^2-1}{(r(r-2))^2}>0,
	\]
	and the matrices
	\[
	H:=\sum_{|x_i|<1} w_i\,b(x_i)b(x_i)^\top,\qquad
	V:=\sqrt{w_{\rm out}}\,[\,b(x_{\rm out})\ \ b(-x_{\rm out})\,].
	\]
	Then the localizing matrix decomposes as
	\begin{equation}\label{eq:MHV}
		M_n(g\,y)=H-VV^\top.
	\end{equation}
	Since the inner measure $\sum_{|x_i|<1}w_i\delta_{x_i}$ has $n+1$ distinct nodes in $(-1,1)$ with
	positive weights, its moment matrix is positive definite, i.e. $H\succ0$. Therefore, by a rank-two Schur complement, we have the equivalence
	\begin{equation}\label{eq:SchurG}
	M_n(gy) =
H-VV^\top\succeq 0
\quad\Longleftrightarrow\quad
\left(\begin{array}{cc}H & V \\ V^\top & I_2 \end{array}\right) \succeq 0 	
\quad\Longleftrightarrow\quad
G\ceq V^\top H^{-1}V \preceq I_2.
	\end{equation}
	
	Define the Christoffel-Darboux kernel of the inner measure
	\[
	K_n(x,y):=b(x)^\top H^{-1}b(y),
	\]
	see e.g.\ \cite[Chap.~2]{Lasserre2022}. Then
	\[
	G
	=w_{\rm out}
	\begin{pmatrix}
		K_n(x_{\rm out},x_{\rm out}) & K_n(x_{\rm out},-x_{\rm out})\\
		K_n(x_{\rm out},-x_{\rm out}) & K_n(x_{\rm out},x_{\rm out})
	\end{pmatrix}
	=
	\begin{pmatrix}\alpha&\beta\\ \beta&\alpha\end{pmatrix},
	\]
	with $\alpha:=w_{\rm out}K_n(x_{\rm out},x_{\rm out})$ and $\beta:=w_{\rm out}K_n(x_{\rm out},-x_{\rm out})$.
	Hence
	\[
	\lambda_+=\alpha+\beta\ \text{with eigenvector}\ u_+=(1,1),\qquad
	\lambda_-=\alpha-\beta\ \text{with eigenvector}\ u_-=(1,-1).
	\]
	
	Let $q(x):=B_r(x)=P_n^{(2)}(x)$ and let $q$ be its coefficient vector in the basis $b$, i.e.
	$q(x)=q^\top b(x)$. We claim that
	\begin{equation}\label{eq:Mq0-short}
		M_n(g\,y)\,q=0.
	\end{equation}
	Indeed, it suffices to show $\ell_y(g\,x^k q)=0$ for $k=0,\dots,n$. Using \eqref{eq:mu},
	\[
	\ell_y(g\,x^k q)=\frac{1}{(r(r-2))^2}\sum_{i=1}^r (1-x_i^2)\,x_i^k\,q(x_i),
	\]
	so we have to prove the discrete orthogonality relation
	\begin{equation}\label{eq:disc-orth-short}
		\sum_{i=1}^r (1-x_i^2)\,x_i^k\,B_r(x_i)=0,\qquad k=0,\dots,n.
	\end{equation}

Recall the two polynomial identities derived from the trigonometric formulas \eqref{eq:trigo}, \eqref{eq:crcos} and \eqref{eq:arcos}:
\begin{align}
	A_r(x)
	&= -\Big(x\,T_{r-1}(x) + (r-1)(1-x^2)\,U_{r-2}(x)\Big), \label{eq:Ar-TU-recall}\\
	2(1-x^2)\,B_r(x)
	&= x\,U_{r-2}(x) - (r-1)\,T_{r-1}(x). \label{eq:Br-TU-recall}
\end{align}
Let $x_i$ be such that $A_r(x_i)=0$. Since $x_i\neq \pm1$, \eqref{eq:Ar-TU-recall} gives
\[
x_i\,T_{r-1}(x_i)=-(r-1)(1-x_i^2)\,U_{r-2}(x_i),
\qquad\text{hence}\qquad
U_{r-2}(x_i)=-\frac{x_i}{(r-1)(1-x_i^2)}\,T_{r-1}(x_i).
\]
Substitute this into \eqref{eq:Br-TU-recall} at $x=x_i$:
\begin{align*}
	2(1-x_i^2)\,B_r(x_i)
	&=x_i\,U_{r-2}(x_i)-(r-1)T_{r-1}(x_i)
	=-\left(\frac{x_i^2}{(r-1)(1-x_i^2)}+(r-1)\right)T_{r-1}(x_i)\\
	&=-\frac{x_i^2+(r-1)^2(1-x_i^2)}{(r-1)(1-x_i^2)}\,T_{r-1}(x_i)
	=-\frac{1+r(r-2)(1-x_i^2)}{(r-1)(1-x_i^2)}\,T_{r-1}(x_i),
\end{align*}
where we used the algebraic identity
\(
x^2+(r-1)^2(1-x^2)=1+r(r-2)(1-x^2).
\)
Multiplying by $(r-1)(1-x_i^2)$ yields
\begin{equation}\label{eq:pre-pointwise}
	2(r-1)(1-x_i^2)^2\,B_r(x_i)=-\bigl(1+r(r-2)(1-x_i^2)\bigr)\,T_{r-1}(x_i).
\end{equation}

Now evaluate the SOS identity \eqref{eq:identity} at $x=x_i$. Since $A_r(x_i)=0$, it reduces to
\begin{equation}\label{eq:identity-at-root}
	1+r(r-2)(1-x_i^2)=4(1-x_i^2)^3\,B_r(x_i)^2.
\end{equation}
Insert \eqref{eq:identity-at-root} into \eqref{eq:pre-pointwise}:
\[
2(r-1)(1-x_i^2)^2\,B_r(x_i)
=-4(1-x_i^2)^3\,B_r(x_i)^2\,T_{r-1}(x_i).
\]
We now justify division by $(1-x_i^2)^2B_r(x_i)$.
First, $x_i\neq\pm1$ so $(1-x_i^2)^2\neq0$.
Second, the Gegenbauer polynomial $B_r=P^{(2)}_{r-3}$ has all its real zeros in $(-1,1)$,
and if $|x_i|<1$ then \eqref{eq:identity-at-root} gives
$1+r(r-2)(1-x_i^2)>1$, so $B_r(x_i)\neq0$ as well.
Hence $B_r(x_i)\neq0$ for every root $x_i$ of $A_r$.
Dividing by $(1-x_i^2)^2B_r(x_i)$ gives
\[
(1-x_i^2)\,T_{r-1}(x_i)\,B_r(x_i)=-\frac{r-1}{2}.
\]
Using the derivative identity \eqref{eq:Arprime}:
\[
A_r'(x)=r(r-2)\,T_{r-1}(x),
\]
one gets the pointwise relation
	\begin{equation}\label{eq:BoverAprime-short}
		(1-x_i^2)\,B_r(x_i)=-\frac{r(r-1)(r-2)}{2}\,\frac{1}{A_r'(x_i)}.
	\end{equation}
	Next, for each $k\le n=r-3$, consider the meromorphic function $F_k(z):=z^k/A_r(z)$. Its only finite
	poles are the simple zeros $x_i$ of $A_r$, and as in the proof of Lemma \ref{lem:residue-sums}, by the simple-pole residue formula
	\cite[Chap.~3, Thm.~1.4]{Stein2003},
	\[
	\Res_{z=x_i}F_k(z)=\frac{x_i^k}{A_r'(x_i)}.
	\]
	Since $\deg A_r=r$ and $k\le r-3$, we have $F_k(z)=O(1/z^3)$ as $z\to\infty$; hence the contour integral
	over $|z|=R$ vanishes as $R\to\infty$. By the residue theorem \cite[Chap.~3, Sec.~2]{Stein2003},
	\[
	0=\sum_{i=1}^r \Res_{z=x_i}F_k(z)=\sum_{i=1}^r \frac{x_i^k}{A_r'(x_i)}.
	\]
	Multiplying by $-\frac{r(r-1)(r-2)}{2}$ and using \eqref{eq:BoverAprime-short} yields \eqref{eq:disc-orth-short},
	hence \eqref{eq:Mq0-short} holds.
	
	Now combine \eqref{eq:MHV} and \eqref{eq:Mq0-short}:
	\[
	(H-VV^\top) q=0\quad\Longrightarrow\quad H  q=VV^\top q.
	\]
	Left-multiplying by $V^\top H^{-1}$ gives
	\[
	V^\top q=(V^\top H^{-1}V)(V^\top  q)=G\,(V^\top  q),
	\]
	so $1$ is an eigenvalue of $G$ with eigenvector $V^\top q\neq 0$.
	Moreover, since $q(-x)=(-1)^n q(x)$,
	\[
	V^\top q
	=\sqrt{w_{\rm out}}\,(q(x_{\rm out}),q(-x_{\rm out}))^\top
	=\sqrt{w_{\rm out}}\,q(x_{\rm out})\,(1,(-1)^n)^\top,
	\]
	so the eigenvector is precisely $u_{(-1)^n}=(1,(-1)^n)$ and therefore
	\begin{equation}\label{eq:eig1}
		\lambda_{(-1)^n}=1.
	\end{equation}
	
	It remains to show $\max(\lambda_+,\lambda_-)=1$. For this we use only the sign of $\beta$.
	Let $\{\pi_k\}$ be the monic orthogonal polynomials for the inner product
	$\langle f,g\rangle_{\rm inn}:=\sum_{|x_i|<1}w_i f(x_i)g(x_i)$ and $h_k:=\langle\pi_k,\pi_k\rangle_{\rm inn}>0$.
	The Christoffel-Darboux formula \cite[\S 3.1.1]{Lasserre2022} gives, for $x\neq y$,
	\[
	K_n(x,y)=\frac{1}{h_n}\,\frac{\pi_{n+1}(x)\pi_n(y)-\pi_n(x)\pi_{n+1}(y)}{x-y}.
	\]
	Since $\nu_{\rm inn}$ is symmetric, $\pi_k(-x)=(-1)^k\pi_k(x)$, so with $(x,y)=(x_{\rm out},-x_{\rm out})$,
	\[
	K_n(x_{\rm out},-x_{\rm out})
	=\frac{(-1)^n}{x_{\rm out}h_n}\,\pi_n(x_{\rm out})\pi_{n+1}(x_{\rm out}).
	\]
	All zeros of $\pi_k$ lie in $(-1,1)$, hence $\pi_k(x_{\rm out})>0$ for $x_{\rm out}>1$, so
	$K_n(x_{\rm out},-x_{\rm out})\neq 0$ and $\mathrm{sign}(K_n(x_{\rm out},-x_{\rm out}))=(-1)^n$.
	As $w_{\rm out}>0$, we obtain
	\[
	\mathrm{sign}(\beta)=(-1)^n,
	\]
	so the eigenvalue associated with $u_{(-1)^n}$ is exactly $\alpha+|\beta|=\max(\lambda_+,\lambda_-)=1$.
	Together with \eqref{eq:eig1}, this yields
	$\max(\lambda_+,\lambda_-)=1$ hence $G\preceq I_2$.
	Finally, \eqref{eq:SchurG} implies $M_n(g\,y)= M_{r-3}(g\,y)\succeq 0$.
\end{proof}

\subsection{Further properties of the moments}

	\begin{lemma}\label{lem:moments-basic}
		For the moment sequence $y$ of the measure $\mu$ in \eqref{eq:mu}, it holds $y_k=0$ for odd $k$ and $1 + k(y_2-1) \leq  y_{2k}\le y_{2(k+1)}$ for all $k\ge 0$.
	\end{lemma}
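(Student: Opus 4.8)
The plan is to exploit only the nonnegativity of the atomic measure $\mu$ together with the value $y_2=1+1/(r(r-2))$ recorded in Lemma~\ref{lem:prob-and-obj} (on polynomials $\ell_y(\cdot)=\int\cdot\,d\mu$, so every $y_k=\int x^k\,d\mu$ is well defined). First, the vanishing of the odd moments is a pure symmetry statement: by Lemma~\ref{lem:roots} the $r$ real simple roots of $A_r$ are symmetric about the origin, and the weight $\tfrac{1}{(r(r-2))^2(1-x_i^2)^2}$ attached to $x_i$ in \eqref{eq:mu} depends on $x_i$ only through $x_i^2$; hence $\mu$ is invariant under $x\mapsto-x$, and $y_k=\int x^k\,d\mu=0$ for every odd $k$.

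For the even moments, I would introduce the first differences $d_k\ceq y_{2(k+1)}-y_{2k}=\int x^{2k}(x^2-1)\,d\mu$. The engine of the argument is that the \emph{second} differences are manifestly nonnegative:
\[
d_{k+1}-d_k=y_{2(k+2)}-2y_{2(k+1)}+y_{2k}=\int x^{2k}(x^2-1)^2\,d\mu\ \ge\ 0,
\]
since the integrand is a nonnegative polynomial and $\mu$ is a nonnegative measure. Thus $(d_k)_{k\ge0}$ is non-decreasing, i.e. the sequence $(y_{2k})_k$ is convex, so $d_k\ge d_0$ for all $k$. The base value is obtained from Lemma~\ref{lem:prob-and-obj}: $d_0=y_2-y_0=y_2-1=1/(r(r-2))\ge0$ (using $y_0=1$). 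Consequently $d_k\ge d_0\ge0$, which is exactly $y_{2k}\le y_{2(k+1)}$, and telescoping yields the lower bound
\[
y_{2k}=y_0+\sum_{j=0}^{k-1}d_j\ \ge\ 1+k\,d_0=1+k(y_2-1),
\]
with equality at $k=0,1$.

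I do not expect any genuine obstacle: the entire content is carried by the pointwise inequality $x^{2k}(x^2-1)^2\ge0$, which forces convexity of $(y_{2k})_k$ and reduces both the monotonicity and the affine lower bound to the already-computed base case $d_0=y_2-1$. The only points requiring a word of care are checking that the atomic weights are even functions of the node (so that $\mu$ is symmetric about the origin, giving the odd-moment statement) and reading off $d_0=y_2-1\ge 0$ from Lemma~\ref{lem:prob-and-obj}; everything else is a telescoping sum.
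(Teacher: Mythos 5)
Your proof is correct, and it takes a genuinely different (and more streamlined) route than the paper. The odd-moment part is identical: symmetry of the roots plus evenness of the weights makes $\mu$ invariant under $x\mapsto -x$. For the even moments, the paper proves the two inequalities separately — the affine lower bound via the tangent-line (convexity) inequality $u^k\ge 1+k(u-1)$ applied to $u=x^2$, and the monotonicity via a Chebyshev correlation inequality on the product measure $\mu\otimes\mu$, which yields the multiplicative gap $y_{2k}-y_{2(k-1)}\ge y_{2(k-1)}(y_2-1)$. You instead derive everything from the single observation that the second differences satisfy
\[
y_{2(k+2)}-2y_{2(k+1)}+y_{2k}=\int x^{2k}(x^2-1)^2\,d\mu\ \ge\ 0,
\]
so the sequence $(y_{2k})_k$ is convex; anchoring at $d_0=y_2-y_0=1/(r(r-2))>0$ (from Lemma~\ref{lem:prob-and-obj}) then gives both $d_k\ge d_0\ge 0$ (monotonicity) and, by telescoping, $y_{2k}\ge 1+k\,d_0=1+k(y_2-1)$. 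Your argument is more elementary and unified, resting only on the pointwise nonnegativity of one polynomial integrand and the positivity of the atomic weights (which holds since no root $x_i$ equals $\pm1$). The paper's correlation-inequality route yields a slightly stronger, multiplicative moment-gap bound, but that extra strength is not used in the lemma's statement, so nothing is lost by your approach.
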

	
	\begin{proof}
		The roots of $A_r$ are symmetric: if $x_i$ is a root then so is $-x_i$.
		The weights in \eqref{eq:mu} depend only on $x_i^2$, hence the atom at $x_i$ and the atom at $-x_i$
		have the same weight. Therefore, for any odd integer $k$,
		\[
		y_k=\int x^k\,d\mu(x)
		=\int (-x)^k\,d\mu(x)
		=-\int x^k\,d\mu(x),
		\]
		hence $y_k=0$.
		
		Fix an integer $k\ge 1$ and set $u(x):=x^2\ge 0$. Then $y_{2k}=\int u(x)^k\,d\mu(x)$.
		By convexity of $u\mapsto u^k$ on $[0,\infty)$, its tangent inequality at $u=1$ reads
		\[
		u^k(x) \ge 1 + k(u(x)-1)\qquad \forall x.
		\]
		Integrating this inequality with respect to $\mu$ gives
		\[
		y_{2k}=\int u(x)^k\,d\mu(x)
		\ge
		\int \big(1+k(u(x)-1)\big)\,d\mu(x)
		=
		1+k\Big(\int u(x)\,d\mu(x)-\int 1\,d\mu(x)\Big)
		=
		1+k(y_2-1),
		\]
		where we used Lemma \ref{lem:prob-and-obj} and $\int d\mu=1$. The same Lemma \ref{lem:prob-and-obj} yields
		\[
		y_2=\int x^2\,d\mu(x)=1-\int(1-x^2)\,d\mu(x)=1+\frac{1}{r(r-2)}>1,
		\]
		hence $y_{2k}> 1$ for all $k\ge 1$.
		
		To prove that the even moments are nondecreasing, define two nondecreasing functions on $[0,\infty)$:
		\[
		f(u):=u^{k-1},\qquad g(u):=u-1.
		\]
		Consider the product measure $\mu\otimes\mu$ on $\R\times\R$. Since $f$ and $g$ are nondecreasing,
		for all $x,x'\in\R$ we have the pointwise inequality
		\[
		\big(f(u(x))-f(u(x'))\big)\big(g(u(x))-g(u(x'))\big)\ \ge\ 0.
		\]
		Integrating with respect to $\mu(x)\mu(x')$ gives
		\begin{align*}
			0
			&\le
			\iint \big(f(u(x))-f(u(x'))\big)\big(g(u(x))-g(u(x'))\big)\,d\mu(x)\,d\mu(x')\\
			&=
			2\int f(u(x))g(u(x))\,d\mu(x)
			-2\Big(\int f(u(x))\,d\mu(x)\Big)\Big(\int g(u(x))\,d\mu(x)\Big).
		\end{align*}
		Hence
		\begin{equation}\label{eq:chebyshev-mu}
			\int f(u(x))g(u(x))\,d\mu(x)
			\ \ge\
			\Big(\int f(u(x))\,d\mu(x)\Big)\Big(\int g(u(x))\,d\mu(x)\Big).
		\end{equation}
		Substituting $f(u)=u^{k-1}$ and $g(u)=u-1$ in \eqref{eq:chebyshev-mu} yields
		\[
		\int u(x)^{k-1}(u(x)-1)\,d\mu(x)
		\ \ge\
		\Big(\int u(x)^{k-1}\,d\mu(x)\Big)\Big(\int (u(x)-1)\,d\mu(x)\Big).
		\]
		Now
		\[
		\int u(x)^{k-1}(u(x)-1)\,d\mu(x)
		=\int u(x)^k\,d\mu(x)-\int u(x)^{k-1}\,d\mu(x)
		=y_{2k}-y_{2(k-1)},
		\]
		and
		\[
		\int (u(x)-1)\,d\mu(x)=\int x^2\,d\mu(x)-\int 1\,d\mu(x)=y_2-1.
		\]
		Therefore we obtain the moment gap bound:
		\[
		y_{2k}-y_{2(k-1)} \ \ge\ y_{2(k-1)}(y_2-1).
		\]
		Since $y_{2(k-1)}\ge 0$ and $y_2-1=\frac{1}{r(r-2)}>0$, it follows that
		$y_{2k}\ge y_{2(k-1)}$ for all $k\ge 1$.
	\end{proof}

	\begin{lemma}\label{prop:y2r}
			For the moment sequence $y$ of the measure $\mu$ in \eqref{eq:mu}, it holds  $\lim_{r\to\infty} y_{2r} = 1$.
	\end{lemma}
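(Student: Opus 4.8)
The plan is to sandwich $y_{2r}$ between $1$ and $1+o(1)$. The lower bound is immediate: by Lemma~\ref{lem:moments-basic} the even moments are nondecreasing, so $y_{2r}\ge y_0=\int d\mu=1$. For the upper bound I would exploit the atomic structure of $\mu$ from Lemma~\ref{lem:roots}: $\mu$ puts positive weights $w_i$ on the $r-2$ nodes in $(-1,1)$ and the common weight
\[
w_{\mathrm{out}}:=\frac{1}{(r(r-2))^2\,(x_{\mathrm{out}}^2-1)^2}>0
\]
on the two outer nodes $\pm x_{\mathrm{out}}$ with $x_{\mathrm{out}}>1$. Since $\int d\mu=1$ by Lemma~\ref{lem:prob-and-obj} and there is at least one inner node ($r-2\ge 1$), we have $0<2w_{\mathrm{out}}<1$. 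Using $0\le x_i^{2r}\le 1$ at the inner nodes,
\[
y_{2r}=2w_{\mathrm{out}}\,x_{\mathrm{out}}^{2r}+\sum_{|x_i|<1}w_i\,x_i^{2r}
\ \le\ 2w_{\mathrm{out}}\,x_{\mathrm{out}}^{2r}+(1-2w_{\mathrm{out}})
\ =\ 1+2w_{\mathrm{out}}\bigl(x_{\mathrm{out}}^{2r}-1\bigr)\ \le\ x_{\mathrm{out}}^{2r},
\]
the last step using $2w_{\mathrm{out}}<1$ and $x_{\mathrm{out}}^{2r}\ge 1$. So the whole statement reduces to proving $x_{\mathrm{out}}^{2r}\to 1$, equivalently $x_{\mathrm{out}}^2-1=o(1/r)$.

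To pin down $x_{\mathrm{out}}$ I would use the hyperbolic substitution $x=\cosh t$ ($t\ge 0$), under which $T_k(\cosh t)=\cosh(kt)$ and therefore $A_r(\cosh t)=\tfrac{r-2}{2}\cosh(rt)-\tfrac{r}{2}\cosh((r-2)t)$. On $[1,\infty)$ the polynomial $A_r$ is strictly increasing, since by \eqref{eq:Arprime} and $T_{r-1}\ge 1$ there we have $A_r'(x)=r(r-2)\,T_{r-1}(x)\ge r(r-2)>0$; together with $A_r(1)=-1$ and $A_r(\cosh t)\to+\infty$ this singles out $x_{\mathrm{out}}$ as the unique zero of $A_r$ in $(1,\infty)$, so it suffices to produce one point to its right where $A_r>0$. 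Evaluating at $t=2/r$ and expanding $\cosh\!\bigl(2-\tfrac4r\bigr)=\cosh 2\,\cosh\tfrac4r-\sinh 2\,\sinh\tfrac4r$, then using $\cosh u\le 1+u^2$ and $\sinh u\ge u$ for $0\le u\le 1$, one gets
\[
A_r\!\bigl(\cosh(2/r)\bigr)\ \ge\ (2\sinh 2-\cosh 2)-\frac{8\cosh 2}{r},
\]
and $2\sinh 2-\cosh 2=\tfrac{e^4-3}{2e^2}>0$, so $A_r(\cosh(2/r))>0$ for all large $r$. Thus $1<x_{\mathrm{out}}<\cosh(2/r)$, whence $x_{\mathrm{out}}^2-1<\sinh^2(2/r)<16/r^2$ and therefore $1\le x_{\mathrm{out}}^{2r}<(1+16/r^2)^r\le e^{16/r}$. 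Combining $1\le y_{2r}\le x_{\mathrm{out}}^{2r}<e^{16/r}$ and letting $r\to\infty$, the squeeze theorem gives $\lim_{r\to\infty}y_{2r}=1$.

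The only genuine obstacle is the \emph{quantitative} rate $x_{\mathrm{out}}^2-1=o(1/r)$. Knowing merely that $x_{\mathrm{out}}\to 1$, or even a crude bound $x_{\mathrm{out}}^2-1=O(1/r)$ (which follows readily from the representation \eqref{eq:Ar-TU-recall} of $A_r$ and elementary comparisons between $T_{r-1}$ and $U_{r-2}$ on $[1,\infty)$), would only show that $y_{2r}$ stays bounded, not that it tends to $1$ — the exponent $2r$ in $x_{\mathrm{out}}^{2r}$ is unforgiving and demands the extra $1/r$ factor. The comparison at $t=2/r$ supplies it cheaply (any fixed $t=c/r$ with $c\tanh c>1$ works), and one could alternatively bootstrap the crude bound once by feeding $t_{\mathrm{out}}=O(r^{-1/2})$ back into the relation $A_r(x_{\mathrm{out}})=0$ to reach $x_{\mathrm{out}}^2-1=O(r^{-3/2})$, which already suffices. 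The parity of $r$ plays no role: for odd $r$ one inner node is $x=0$, contributing $0^{2r}=0\le 1$, and the outer pair $\pm x_{\mathrm{out}}$ is present in both cases.
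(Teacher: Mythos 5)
Your proof is correct and follows essentially the same route as the paper: the lower bound $y_{2r}\ge 1$ from Lemma~\ref{lem:moments-basic}, the upper bound $y_{2r}\le x_{\mathrm{out}}^{2r}$ from the support of $\mu$, and the hyperbolic substitution $x=\cosh t$ with $T_k(\cosh t)=\cosh(kt)$ to show $x_{\mathrm{out}}^2-1=O(1/r^2)$ and hence $x_{\mathrm{out}}^{2r}\to 1$. Your monotonicity-plus-test-point argument at $t=2/r$ is a fully quantitative substitute for the paper's informal ``Taylor expansion of $\cosh$ implies $t_r=\Theta(1/r)$,'' and is if anything more rigorous at that step.
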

	
	\begin{proof}
		By Lemma~\ref{lem:moments-basic}, $y_{2r}\ge 1$.
		For the upper bound, note that $\mu$ is supported on $\abs{x}\le x_{\mathrm{out}}$, hence
		\[
		y_{2r}=\int x^{2r}\,d\mu \le x_{\mathrm{out}}^{2r}.
		\]
		It remains to show $x_{\mathrm{out}}^{2r}\to 1$. Writing $x_{\mathrm{out}}=\cosh t_r$ for some $t_r>0$ and using
		$T_k(\cosh t)=\cosh(kt)$, the equation $A_r(x_{\mathrm{out}})=0$ becomes
		\[
		(r-2)\cosh(rt_r) = r\cosh((r-2)t_r).
		\]
		A Taylor expansion of $\cosh$ for small $t_r$ implies $t_r=\Theta(1/r)$ and therefore
		$x_{\mathrm{out}}-1=\cosh t_r-1=\Theta(t_r^2)=\Theta(1/r^2)$.
		Consequently,
		\[
		\log(x_{\mathrm{out}}^{2r}) = 2r\log(1+O(1/r^2)) = O(1/r)\to 0,
		\]
		so $x_{\mathrm{out}}^{2r}\to 1$ and hence $y_{2r}\to 1$.
	\end{proof}

	\subsection{Rational expressions of the moments}
	
	The optimal moments are actual \emph{rational} numbers. To see why,
	define a polynomial $P_r(u)\in\mathbb Q[u]$ by extracting the even part of $A_r$:
	\[
	A_r(x)=
	\begin{cases}
		P_r(x^2),& r\ \text{even},\\
		x\,P_r(x^2),& r\ \text{odd}.
	\end{cases}
	\]
	Then the nonzero squared nodes $u_i>0$ are precisely the roots of $P_r(u)$.
	The crucial point is that the moment sum
	\[
		y_{2p}:=\int x^{2p}\,d\mu_r(x)=w_0 0^p + 2\sum_{u_i> 0}w_i {u_i^p},\qquad p\ge 0
	\]
	is a \emph{symmetric rational function}
	of the roots of an explicit polynomial with rational coefficients.
	
	Let $C_r\in\mathbb Q^{m\times m}$ denote the companion matrix of the monic polynomial proportional to $P_r$ (here $m=\deg P_r=\lfloor r/2\rfloor$), so that the eigenvalues of $C_r$ are
	exactly the roots $(u_i)$ of $P_r$.
	Since $u=1$ is not a root of $P_r$, the matrix $I-C_r$ is invertible, and the spectral mapping theorem yields
	\[
	\mathrm{tr}\!\left(C_r^p\,(I-C_r)^{-2}\right)
	=\sum_{u_i>0}\frac{u_i^p}{(1-u_i)^2}.
	\]
	Therefore, for every $p\ge 1$,
	\begin{equation}
		\label{eq:trace-moment}
		y_{2p}=
		\frac{2}{r^2(r-2)^2}\,\mathrm{tr}\!\left(C_r^p\,(I-C_r)^{-2}\right).
	\end{equation}

	\subsection{Moment vectors for low relaxation orders}\label{subsec:explicit-moments}
	
	Using the trace formula (\ref{eq:trace-moment}) we can compute the first moment vectors.
	
	\paragraph{Order $r=3$.}
	\[
	(y_0,y_2,y_4,y_6)=\left(1,\ \frac{4}{3},\ 2,\ 3\right).
	\]
	
	\paragraph{Order $r=4$.}
	\[
	(y_0,y_2,y_4,y_6,y_8)
	=\left(1,\ \frac{9}{8},\ \frac{21}{16},\ \frac{99}{64},\ \frac{117}{64}\right).
	\]
	
	\paragraph{Order $r=5$.}
	\[
	(y_0,y_2,y_4,y_6,y_8,y_{10})
	=\left(1,\ \frac{16}{15},\ \frac{52}{45},\ \frac{34}{27},\ \frac{223}{162},\ \frac{1465}{972}\right).
	\]
	
	\paragraph{Order $r=6$.}
	\[
	(y_0,y_2,y_4,y_6,y_8,y_{10},y_{12})
	=\left(
	1,\ \frac{25}{24},\ \frac{35}{32},\ \frac{295}{256},\ \frac{7475}{6144},\ \frac{21075}{16384},\ \frac{178425}{131072}
	\right).
	\]
	
	\paragraph{Order $r=7$.}
	\[
	(y_0,y_2,y_4,y_6,y_8,y_{10},y_{12},y_{14})
	=\left(
	1,\ \frac{36}{35},\ \frac{186}{175},\ \frac{963}{875},\ \frac{2853}{2500},\ \frac{29613}{25000},\ \frac{1230417}{1000000},\ \frac{12788307}{10000000}
	\right).
	\]
	
	\paragraph{Order $r=8$.}
	\[
	(y_0,y_2,y_4,y_6,y_8,y_{10},y_{12},y_{14},y_{16})
	=\left(
	1,\ \frac{49}{48},\ \frac{301}{288},\ \frac{3703}{3456},\ \frac{22799}{20736},\ \frac{561883}{497664},\ \frac{3463859}{2985984},\ \frac{42726971}{35831808},\ \frac{65904559}{53747712}
	\right).
	\]
	
\section{Proof of Theorem \ref{thm:main}}\label{sec:proof}

From Lemma \ref{lem:sos}, for every $r\ge 3$, the polynomials $A_r,B_r$ defined in \eqref{eq:def-Ar-Br}
satisfy the identity \eqref{eq:identity}.
Thus the triple
\[
\varepsilon:=-\frac{1}{r(r-2)},\qquad
p(x):=\frac{A_r(x)^2}{r(r-2)},\qquad
q(x):=\frac{4B_r(x)^2}{r(r-2)}
\]
is feasible for \eqref{eq:SOS}. Therefore
\begin{equation}\label{eq:lower-bound-epsr}
	\sup\eqref{eq:SOS}\ \ge\ -\frac{1}{r(r-2)}.
\end{equation}

Let $\mu$ be the atomic measure \eqref{eq:mu} supported on the roots of $A_r$ and let
$y$ be its moment vector.
By Lemma~\ref{lem:prob-and-obj}, $\ell_y(1)=y_0=1$.
By Lemma~\ref{lem:moment-matrix-psd}, $M_r(y)\succeq 0$.
By Lemma~\ref{lem:psd-localizing}, $M_{r-3}(g\,y)\succeq 0$.
Hence $y$ is feasible for \eqref{eq:MOM}.
Again by Lemma~\ref{lem:prob-and-obj},
\[
\ell_y(f)=-\frac{1}{r(r-2)},
\]
so
\begin{equation}\label{eq:upper-bound-epsr}
	\inf\eqref{eq:MOM}\ \le\ -\frac{1}{r(r-2)}.
\end{equation}

By the weak duality Lemma~\ref{prop:weak-duality}), we have
\[
\inf\eqref{eq:MOM}\ \ge\ \sup\eqref{eq:SOS}.
\]
Combining this inequality with \eqref{eq:lower-bound-epsr} and \eqref{eq:upper-bound-epsr} yields
\[
\inf\eqref{eq:MOM}=\sup\eqref{eq:SOS}=-\frac{1}{r(r-2)}.
\]
By Lemma~\ref{prop:slater}, strong duality and attainment hold on both sides, so this common value
is precisely the relaxation optimum $\varepsilon_r^\star$. This proves Theorem~\ref{thm:main}.
\qed

\section{Conclusion}

In this paper we show that trigonometric properties of orthogonal polynomials can exploited to construct in rational arithmetic an analytic solution of the semidefinite relaxations of Stengle's example \cite{Stengle1996}, thereby settling the question of the exact convergence rate of the moment-SOS hierarchy.

It would be interesting to investigate whether similar techniques could be used to solve analytically the other low-dimensional POP examples used as challenging benchmarks for high-precision semidefinite solvers \cite{Kocvara2025}. 

Key to the analytic solution of problem \eqref{eq:SOS} is the pure square form of the polynomials $p$ and $q$, expressed as a quadratic algebraic equation \eqref{eq:identity} in polynomials $A_r$ and $B_r$. The rank-one structure of the Gram matrices of the SOS polynomials was also exploited in \cite{Henrion2025} to derive tight upper and lower bounds on the values of the moment-SOS hierarchy for a parametric POP problem. It is currently not well understood for which class of POP does optimality of a mom-SOS relaxation imply that the Gram matrices of the SOS dual multipliers are rank-one.
	
	\section*{Acknowledgments}

The author is grateful to Michal Ko\v cvara for sharing the slides of his talk \cite{Kocvara2025} and for insightful discussions. The author acknowledges the use of AI for assistance with brainstorming ideas, mathematical development, coding and drafting the manuscript. The final
content, analysis and conclusions remain the sole responsibility of the author.

\end{document}